\newcommand{\CM}{\mathcal M}
\newcommand{\CN}{\mathcal N}
\newcommand{\CL}{\mathcal L}
\newcommand{\sub}{\subseteq}
\newcommand{\0}{\emptyset}
\newcommand{\aff}[1]{{\mathscr A}{(#1)}}
\theoremstyle{plain}
\newtheorem{thm}{\protect\theoremname}[section]
\newtheorem*{theorem*}{Theorem}
\theoremstyle{definition}
\newtheorem{defn}[thm]{\protect\definitionname}
\theoremstyle{plain}
\newtheorem{fact}[thm]{Fact}
\theoremstyle{remark}
\newtheorem{claim}[thm]{\protect\claimname}
\theoremstyle{plain}
\newtheorem{lem}[thm]{\protect\lemmaname}
\theoremstyle{plain}
\newtheorem{prop}[thm]{\protect\propositionname}
\theoremstyle{remark}
\newtheorem{rem}[thm]{\protect\remarkname}
\providecommand{\claimname}{Claim}
\providecommand{\definitionname}{Definition}
\providecommand{\lemmaname}{Lemma}
\providecommand{\propositionname}{Proposition}
\providecommand{\remarkname}{Remark}
\providecommand{\theoremname}{Theorem}
\newcommand{\CR}{\mathcal R}
\newcommand{\CI}{\mathcal I}
\newcommand{\ra}{\rangle}
\newcommand{\la}{\langle}
\newcommand{\rest}{\upharpoonright}
\title{Additive Reducts of real closed fields and strongly bounded structures}
\author{Hind Abu Saleh}
\address{Department of Mathematics, University of Haifa, Haifa, Israel}
\email{\href{mailto:hind.abu.94@gmail.com}{hind.abu.94@gmail.com}}
\author{Ya'acov Peterzil}
\address{Department of Mathematics, University of Haifa, Haifa, Israel}
 \email{\href{mailto:kobi@math.haifa.ac.il}{kobi@math.haifa.ac.il}}
\begin{document}

\begin{abstract}
Given a real closed field $R$, we identify exactly four proper reducts of $R$ which expand the underlying (unordered) $R$-vector space structure. Towards this theorem we introduce a new notion, of strongly bounded reducts of linearly ordered structures:

A reduct  $\CM$ of
a linearly ordered structure $\la R;<,\cdots\ra $ is called
\emph{strongly bounded} if every $\mathcal M$-definable subset of
$R$ is either bounded or co-bounded in $R$.
We investigate strongly bounded additive reducts of o-minimal structures and as a corollary prove the above theorem on additive reducts of real closed fields.
\end{abstract}

\thanks{The second author is partially supported by the Israel Science Foundation grant 290/19.}
\maketitle

\section{Introduction}

The motivation behind the work here is a conjecture about reducts of real closed fields, from
\cite{Pet1}. Before stating the conjecture, let us clarify our usage
of the notion of ``reduct'' here.

\begin{defn}
Given two structures $\mathcal{M}$ and $\mathcal{N}$, we say that
$\mathcal{M}$ is \emph{reduct of} $\mathcal{N}$ (or, $\mathcal{N}$
is \emph{an expansion of} $\mathcal{M}$), denoted by $\CM\dot{\subseteq} \CN$, if $\mathcal{M}$ and $\mathcal{N}$
have the same universe and every set that is definable in $\mathcal{M}$
is also definable in $\mathcal{N}$ (where definability allows parameters).
We say that $\mathcal{M}$ and $\mathcal{N}$ are \emph{interdefinable}, denoted by $\mathcal{M}\dot{=}\mathcal{N}$,
if $\mathcal{M}$ is reduct of $\mathcal{N}$ and $\mathcal{N}$ is
reduct of $\mathcal{M}$.

We say $\mathcal{M}$ is \emph{a proper reduct} of $\mathcal{N}$
(or, $\mathcal{N}$ a proper expansion of $\mathcal{M}$) if $\mathcal{M}\dot{\subseteq}\mathcal{N}$
and not $\mathcal{M}\dot{=}\mathcal{N}$.
% We denote this by $\mathcal{M}\dot{\subsetneqq}\mathcal{N}$.
\end{defn}

Below, we let $\Lambda_R$ be the family of all $R$-linear maps $\lambda_a(x)=\alpha x$, for all $\alpha\in R$. Our ultimate goal here is to prove:
\begin{thm}\label{theorem: main} Let $R$ be a real closed field.
Then, the only reducts between the vector space $\langle R;+,\Lambda_R \rangle $ and the field $\langle R;<,+,\cdot \rangle $ are
\begin{center}
$\mathcal R_{alg}:=\langle R;<,+,\cdot\rangle $
\end{center}
\vspace{.1cm}

\begin{center}
$\mathcal{R}_{sb}:=\langle R;<,+,\Lambda_R, \mathfrak B_{sa} \rangle $
\par\end{center}
\vspace{.1cm}

\begin{center}
$\mathcal{R}_{semi}:=\langle R;<,+,\Lambda_R \rangle $
\hspace{.4cm} $\CR_{bd}:=\langle R;<^{*},+,\Lambda_R,\mathfrak B_{sa}\rangle $
\end{center}
\vspace{.1cm}
\begin{center}
$\CR_{lin}^*:=\langle R;<^{*},+,\Lambda_R\rangle $
\par\end{center}

\vspace{.1cm}
\begin{center}
$\CR_{lin}:=\langle R;+,\Lambda_R \rangle $,
\par\end{center}
where $<^{*}$ is the linear order on the interval $(0,1)$ and $\mathfrak B_{sa}$ the collection of all bounded semialgebraic sets over $R$.
\end{thm}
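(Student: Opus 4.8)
The plan is to prove Theorem~\ref{theorem: main} by reducing it to a classification of \emph{strongly bounded} additive reducts of the o-minimal field $\CR_{alg}$, which is the main technical work of the paper. First I would set up the dichotomy according to whether a given reduct $\CM$ between $\la R;+,\Lambda_R\ra$ and $\CR_{alg}$ defines a linear order on $R$ with no greatest or least element (equivalently, one ``as big as'' $<$) or only defines orders on bounded intervals. In the first case, $\CM$ expands $\CR_{semi}=\la R;<,+,\Lambda_R\ra$, and one invokes the known classification of reducts of $\CR_{alg}$ expanding the ordered vector space --- by results of Marker--Peterzil--Pillay and Peterzil, the only such reducts are $\CR_{semi}$ itself, $\CR_{alg}$, and structures lying strictly between, which one must show coincide with $\CR_{sb}$. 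The identification $\CR_{sb}=\la R;<,+,\Lambda_R,\mathfrak B_{sa}\ra$ comes from showing that any proper expansion of $\CR_{semi}$ inside $\CR_{alg}$ must define some bounded semialgebraic set that is not semilinear, and conversely that adding all bounded semialgebraic sets does not recover multiplication globally (only ``locally/boundedly''), so $\CR_{sb}$ is a genuine intermediate reduct.

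Next I would handle the case where $\CM$ does not define an unbounded order. Here the key is the notion of strong boundedness: I would first argue that if $\CM$ is a reduct of $\CR_{alg}$ extending $\CR_{lin}$ and $\CM$ is not one of $\CR_{semi},\CR_{sb},\CR_{alg}$, then every $\CM$-definable subset of $R$ is bounded or co-bounded --- that is, $\CM$ is strongly bounded --- because otherwise, using o-minimality of $\CR_{alg}$, an $\CM$-definable set that is neither bounded nor co-bounded would have endpoints in its boundary that let one recover an unbounded order, putting us back in the previous case. Then I would apply the paper's structural analysis of strongly bounded additive reducts of o-minimal structures: such a reduct, restricted to a bounded interval such as $(0,1)$, carries an induced ordered structure, and one shows that $\CM$ is interdefinable with one of $\CR_{bd}=\la R;<^*,+,\Lambda_R,\mathfrak B_{sa}\ra$, $\CR_{lin}^*=\la R;<^*,+,\Lambda_R\ra$, or $\CR_{lin}=\la R;+,\Lambda_R\ra$, according to whether $\CM$ defines the bounded order $<^*$ and/or the bounded semialgebraic sets. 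The trichotomy here mirrors the unbounded case but ``localized,'' and the transfer between the two is the heart of why exactly these reducts appear.

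The main obstacle, I expect, is proving that the list is \emph{exhaustive} in the strongly bounded regime --- that is, showing a strongly bounded additive reduct of $\CR_{alg}$ that defines the bounded order $<^*$ but is a proper expansion of $\CR_{lin}^*$ must already define all of $\mathfrak B_{sa}$ (and hence equal $\CR_{bd}$), with no further intermediate possibilities. This requires a delicate analysis: one takes an $\CM$-definable set witnessing non-semilinearity, restricts attention to its behaviour on bounded boxes, and uses the o-minimal cell decomposition together with the linearity of $\Lambda_R$ to ``propagate'' definability of one bounded semialgebraic set to all of them, presumably via definable families and a pullback/rescaling argument using the scalar maps $\lambda_\alpha$. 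A secondary difficulty is ensuring the six structures are pairwise non-interdefinable, which requires exhibiting, for each adjacent pair, a concrete definable set present in one and absent in the other --- e.g.\ distinguishing $\CR_{bd}$ from $\CR_{sb}$ by the fact that $<$ is definable in the latter but any $\CM$-definable order in the former has bounded domain, and distinguishing $\CR_{lin}^*$ from $\CR_{lin}$ by noting that $<^*$ on $(0,1)$ is not semilinearly definable without order. Once the classification of strongly bounded additive reducts is in hand, Theorem~\ref{theorem: main} follows by combining the two cases and checking that $\CR_{sb}$ and $\CR_{bd}$ are the only ``new'' entries beyond the four already visible.
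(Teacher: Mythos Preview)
Your proposal is correct and follows essentially the same route as the paper: split on whether $\CM$ is strongly bounded (equivalently, by Lemma~\ref{Lemma:stronglu bounded}, whether $<$ is definable), in the ordered case invoke the Peterzil/Edmundo classification between $\CR_{semi}$ and $\CR_{alg}$ (Fact~\ref{fact:edmundo}), and in the strongly bounded case combine Theorem~\ref{thm: main results} with Theorem~\ref{thm4:If--is} to land in $\CR_{lin}$, $\CR_{lin}^*$, or $\CR_{bd}$. The only places where the paper's argument is sharper than your sketch are the stability step (any proper expansion of $\CR_{lin}$ is unstable, hence not $1$-based, which is what forces $<^*$ to be definable---you gesture at this but do not name the mechanism) and the distinctness $\CR_{bd}\neq\CR_{sb}$, which the paper proves via an explicit automorphism of a saturated extension fixing all bounded sets pointwise but reversing sign on an infinite element (Lemma~\ref{lemma:one direction of the thrm}), rather than by an abstract ``bounded domain'' argument.
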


\begin{rem}
\begin{enumerate}
\item The definable sets in $\CR_{alg}$ are called semialgebraic, while those definable in $\CR_{semi}$ are semilinear. The structure $\CR_{sb}$ above is called {\em semibounded}, as it expands the ordered vector space by a collection of bounded sets. Semibounded structures were studied in several articles, for example \cite{Edmundo},\cite{Belegradek},\cite{Pet2}.

\item Notice that because all the above structures expand the full underlying $R$-vector space,
then once $<^{*}$ is definable then the restriction of $<$ to every bounded interval is definable.

\item A similar project, in the setting of Presburger Arithmetic, was carried out in \cite{Conant}, where it was proven that there are no proper reducts between $\la \mathbb Z;+\ra$ and $\la \mathbb Z;<,+\ra$. We expect that in arbitrary models of Presburger arithmetic, an analoguous result to Theorem \ref{theorem: main} holds, with the intermediate reducts corresponding to possible restrictions of $<$ to infinite subintervals.
\end{enumerate}
\end{rem}
%Indeed, $<$ is\ definable on \textbf{$(a,b)$} then %using\textbf{
%$+$ }we can define it on\textbf{ %$(-\frac{a+b}{2},\frac{a+b}{2})$
%}and using multiplication by large enough $n$ we %can define it on
%every bounded interval.

Some of the work towards the proof of Theorem \ref{theorem: main} can be
read off earlier results. In particular, the fact that the semibounded reduct $\CR_{sb}$ is the only proper reduct between $\CR_{semi}$ and $\mathcal R_{alg}$, was proven over $\mathbb R$  in \cite{Pet1} and can be deduced for arbitrary real closed field from Edmundo's \cite{Edmundo} (see Fact \ref{fact:edmundo} below).  However, the bulk of the work here is to show that if a reduct $\CM$ of $\CR_{alg}$ does not define the full order then it is necessarily a reduct of $\CR_{bd}$. Towards that, we introduce
a new notion, of ``a strongly bounded structure'' in a more general setting, and most of our results here
are about such structures:
\begin{defn}
Let $\CR=\langle R;<,\cdots \rangle $ be
a linearly ordered structure. A reduct  $\CM=\langle R;\cdots \rangle $
of $\mathcal R$ is  called
\emph{strongly bounded} if every $\CM$-definable
$X\sub R$ is either bounded or co-bounded (namely, $R\setminus X$ is bounded).
\end{defn}
\begin{rem}\begin{enumerate} \item  The term ``strongly bounded'' was chosen to reflect a combination of a semibounded structure with a strongly minimal one.
Almost all of our work here concerns strongly bounded additive reducts of o-minimal structures, where the underlying linear order is dense. Analogous definitions could be given for, say, models of Presburger arithmetic if one wishes to study all reducts which expand the underlying ordered group.
\item The definition of a strongly bounded structure requires an ambient linear order, thus it might not seem amenable to working in elementarily equivalent structures. However, in practice we only work in sufficiently saturated elementary extensions of a strongly bounded $\CM$ as above, and thus we may assume that this elementary extension is also a reduct of a linearly ordered elementary extension of $\CR$.
    \end{enumerate}
    \end{rem}
\vspace{.2cm}

By definition, if $\CM$ is a strongly bounded reduct of a linearly ordered structure then the ordering $<$ is not definable in $\CM$. We prove several results about strongly bounded reducts of o-minimal structures (see for example Theorem \ref{thm: main results} and Theorem \ref{acl=dcl}):
\begin{theorem*} Let $\la R;<,+,\cdots\ra $ be an o-minimal expansion of an ordered group
and let $\CM=\la R,+,\cdots\ra$ be a strongly bounded reduct. Then
\begin{enumerate}
\item Every $\CM$-definable subset of $R^n$ is already definable in the structure $\la R;+,\Lambda_\CM,\mathfrak B^*\ra$, where $\Lambda_M$ is the collection of $\CM$-definable endomorphisms of $\la R,+\ra$ and $\mathfrak B^*$ is the collection of all $\CM$-definable bounded sets.

    \item For every $\CN\equiv \CM$, the model theoretic algebraic closure equals the definable closure.
        \end{enumerate}
        \end{theorem*}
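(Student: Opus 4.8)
The plan is to derive both statements from a structural analysis of $\CM$ carried out inside its o-minimal companion $\CM^{+}:=\la\CM,<\ra$ — a reduct of $\la R;<,+,\cdots\ra$, hence itself an o-minimal expansion of an ordered group. Put $\CM_{0}:=\la R;+,\Lambda_{\CM},\mathfrak B^{*}\ra$. Every primitive of $\CM_{0}$ is $\CM$-definable, so $\CM_{0}$ is a reduct of $\CM$ and part $(1)$ is exactly the reverse reduction $\CM\dot{\subseteq}\CM_{0}$, i.e.\ interdefinability $\CM\dot{=}\CM_{0}$.

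The crux is to show that $\CM^{+}$ is \emph{semibounded}, i.e.\ defines no real closed field on an unbounded interval. I would run the Peterzil--Starchenko trichotomy inside $\la R;<,+,\cdots\ra$: since $\CM$ expands the infinite group $\la R,+\ra$ it induces at a generic point a non-trivial local structure, hence one that is linear over a real closed field or carries a full real closed field on an interval. Strong boundedness prohibits $\CM$ from defining a linear order on any unbounded set — an unbounded order-interval would contain an $\CM$-definable subset of $R$ that is neither bounded nor co-bounded — so in either alternative the interval in question is bounded; in particular any real closed field $\CM$ defines sits inside a bounded interval. One then checks that $\la R,+\ra$ and $\Lambda_{\CM}$ cannot leverage such bounded field data into a definable identification of a bounded interval with an unbounded one, so that every $\CM$-definable set is semibounded and $\CM^{+}$ is semibounded. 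I expect the careful execution of this step to be the main obstacle.

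Granting this, Edmundo's theorem (Fact~\ref{fact:edmundo}) yields a cone decomposition of each $\CM^{+}$-definable set — a finite union of images of $B\times(0,\infty)^{k}$ under affine maps with $\CM^{+}$-definable directions and $\CM^{+}$-definable bounded bases $B$. Descending to $\CM$, I would show: (i) $\Lambda_{\CM^{+}}=\Lambda_{\CM}$, since a $\CM^{+}$-definable endomorphism of $\la R,+\ra$ is $\Lambda$-linear on each cell and its graph, canonically attached to it and an additive subgroup, is fixed by all $\CM$-automorphisms over a parameter for it, hence $\CM$-definable in a saturated model; and (ii) for $\CM$-definable $X$, the ``recession data'' of $X$ at infinity is again canonically attached to $X$, hence $\CM$-definable, and — by the trichotomy and strong boundedness, which forbid half-ray directions (their coordinate projections would be $\CM$-definable half-lines) — purely $\Lambda_{\CM}$-linear; peeling off the associated unbounded part of $X$ and inducting on $\dim X$ leaves a bounded $\CM$-definable remainder, which lies in $\mathfrak B^{*}$. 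As each piece is visibly $\CM_{0}$-definable, so is $X$, giving $(1)$.

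For $(2)$, fix $\CN\equiv\CM$; as the Remark notes we may take $\CN$ to be a reduct of a linearly ordered elementary extension, so $\CN^{+}:=\la\CN,<\ra$ is o-minimal and, by the above, semibounded. Let $a\in\operatorname{acl}^{\CN}(A)$ and let $F$ be the (finite) orbit of $a$ under $\CN$-automorphisms fixing $A$; it is $\CN$-definable over $A$, finite, hence also $\CN^{+}$-definable, and since $\CN^{+}$ is an o-minimal expansion of a group every element of $F$ lies in $\operatorname{dcl}^{\CN^{+}}(A)$. By $(1)$ and the cone decomposition of $\CN^{+}$, the bounded set $F$ is built from $\CN$-definable bounded sets, $<$, and $\Lambda$: if $\CN$ defines no order at all, then $\CN$ is linear with only affine definable sets and $F\sub\operatorname{span}_{\Lambda}(A)\sub\operatorname{dcl}^{\CN}(A)$; otherwise $\CN$ defines a real closed field on a bounded interval, which — after translating $F$ by an $A$-definable point and applying an $A$-definable $\Lambda$-linear map — transports to a bounded box containing $F$ and so yields a linear order on $F$ definable in $\CN$ over $A$. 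Either way $F$ meets $\operatorname{dcl}^{\CN}(A)$; but a point of $\operatorname{dcl}^{\CN}(A)$ is fixed by all $\CN$-automorphisms over $A$, so transitivity on $F$ forces $|F|=1$ and $a\in\operatorname{dcl}^{\CN}(A)$. Hence $\operatorname{acl}^{\CN}(A)=\operatorname{dcl}^{\CN}(A)$. Here too the subtle point is the parameter bookkeeping keeping everything definable over $A$.
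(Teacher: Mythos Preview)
Your strategy --- pass to $\CM^{+}=\la\CM,<\ra$, show it is semibounded, apply Edmundo's cone decomposition, and then descend back to $\CM$ --- is exactly the shortcut the paper considers and explicitly sets aside (see the Remark following Proposition~\ref{prop eventually constant}). The authors observe that Edmundo's theorem applies to $\CM^{+}$, but that one does not a priori know that the $\CM$-definable functions and sets coincide with the $\CM^{+}$-definable ones, so the cone decomposition of an $\CM$-definable $X$ produced in $\CM^{+}$ has pieces that are only $\CM^{+}$-definable. Your descent steps (i) and (ii) are precisely where this bites: the argument for $\Lambda_{\CM^{+}}=\Lambda_{\CM}$ is circular (an $\CM^{+}$-definable endomorphism has no given $\CM$-parameter over which $\CM$-automorphisms must fix it), and ``the recession data of $X$ is canonically attached to $X$, hence $\CM$-definable'' is a slogan, not an argument --- making this precise and $\CM$-definable is the whole content of the theorem. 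The paper does extract the needed endomorphisms, but only from inside an $\CM$-definable envelope, via $Stab_{bd}(X)$ and the affine-part operator $\mathscr A(X)$ (Proposition~\ref{0-definable}), not from $\CM^{+}$.

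What the paper actually does is work entirely inside $\CM$: first prove that in any $\CM$-definable family of bounded subsets of $R$ the interval lengths are uniformly bounded by an element of $dcl_{\CM}(\emptyset)$ (Theorem~\ref{thm uniform bound on the length}), then a structure theorem for $1$-dimensional $\CM$-definable $X\sub R^{2}$ (Theorem~\ref{thm: subset of R^2}) showing $X$ is, up to a bounded remainder, a union of graphs of $\lambda_{i}+d$ with $\lambda_{i}\in\Lambda_{\CM}$ and the $d$'s lying in $A$-definable finite sets. From this they deduce $acl=dcl_{bd}$ by induction on the number of parameters (Theorem~\ref{acl=dcl}), and only then prove part~(1) by induction on $n$ plus compactness. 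Note the order is opposite to yours: $(2)$ is used in the proof of $(1)$, to control the endpoints of the bounded pieces over the right parameter set. Your proof of $(2)$ also has a gap: in the non-linear case you translate $F$ by ``an $A$-definable point'' and apply ``an $A$-definable $\Lambda$-linear map'' to land in a fixed bounded box, but nothing guarantees such $A$-definable data exist --- this is exactly what the uniform-bound theorem and the $R^{2}$ structure theorem supply.
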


\noindent{\bf Acknowledgments}  The article are based on results from the  M.Sc. thesis of the first author, as part of her studies at the University of Haifa.
We thank Itay Kaplan and Assaf Hasson for reading an earlier version of this work and commenting on it.

\section{Proper expansions of ${\mathcal R}_{lin}$}

In this section we assume that $\CR_{omin}$ is an o-minimal expansion of a real closed field $R$ and $\CM=\la R;+,\cdots\ra$ is an additive reduct of $\CR_{omin}$.

%\mathcal{\mathscr{\left\langle %\mathbb{R};+,<,...\right\rangle }}$

%\begin{defn}
%(i) Let $\mathcal{M}$ be $\omega^{+}$-saturated %model of a complete
%theory $T$. A formula $\phi(\bar{x},\bar{y})$ %has the order property
%in $T$ if there are tuples %$\bar{a_{i},\:}\bar{b_{i}},i\in\mathbb{N}$
%such that:
%\[
%\models\phi(\bar{a_{i},}\bar{b_{j}})\Leftrightar%row i\leq j
%\]

%(ii) A complete theory $T$ is stable if there is %no formula with
%the order property.
%\end{defn}

\begin{thm}
\label{thm:<^* is definable in the vector space} If $\CM$ is not a redcut of $\CR_{lin}=\la R;+,\Lambda_R\ra$ then $<^{*}$
is definable in $\mathcal{M}$.
\end{thm}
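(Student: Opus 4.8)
The plan is to show that the failure of $\CM$ to be a reduct of $\CR_{lin}$ already forces $\CM$ to define a bounded subset of $R$ which is neither finite nor cofinite, then to convexify that set, and finally to read off from it the restriction of $<$ to a bounded interval. \emph{Step 1 (producing a non-trivial unary definable set).} Suppose for contradiction that every $\CM$-definable $X\sub R$ is finite or cofinite. Since $\CM$ is a reduct of the o-minimal $\CR_{omin}$, uniform finiteness in o-minimal structures gives for each $\CM$-formula $\varphi(x;\bar y)$ a bound $N_\varphi$ so that every instance $\varphi(R;\bar b)$ is a union of at most $N_\varphi$ points and open intervals; hence ``every instance is finite or cofinite'' is a first-order property of $\CM$, so $\mathrm{Th}(\CM)$ is strongly minimal. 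A strongly minimal expansion of a group that is not locally modular interprets an infinite algebraically closed field (Hrushovski); such a field would be definable in $\CR_{omin}$ with underlying set of o-minimal dimension one, whereas an infinite field definable in an o-minimal structure on a one-dimensional set is real closed, a contradiction. Hence $\CM$ is one-based, so every $\CM$-definable subset of $R^n$ is a Boolean combination of cosets of $\CM$-definable subgroups of $\la R^n;+\ra$; these subgroups are $R$-linear subspaces, hence $\CR_{lin}$-definable, so every $\CM$-definable set is $\CR_{lin}$-definable, contradicting the hypothesis. Thus some $\CM$-definable $X\sub R$ is infinite and co-infinite, and, being a finite union of points and intervals by o-minimality of $\CR_{omin}$, it contains an interval and omits one.

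\emph{Step 2 (normalising and convexifying $X$).} Using that $\CM$ contains all translations, replace $X$ by $X\setminus(X+d)$ (or $(X+d)\setminus X$) for a suitable $d$ to make it bounded, translate so that $\inf X=0$, and peel off the finitely many isolated points of $X$ near $0$, so that $(0,\epsilon)\sub X$ for some $\epsilon>0$ and $X\sub[0,\infty)$; the geometric side-conditions used to choose $d$ and to recognise these manipulations (``$Z$ is infinite'', ``$\inf Z$ is isolated in $Z$'') are first-order over $\CM$ by the uniform o-minimal bound. It then remains to replace $X$ by an $\CM$-definable interval $[0,\delta)$: this is done by intersecting $X$ with suitably chosen translates of itself, which removes the components of $X$ lying away from $0$, the choices being organised inside a sufficiently saturated elementary extension and transferred back. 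I expect this convexification to be the main obstacle, precisely because one cannot directly ``recognise intervals'' inside $\CM$, so the translates must be selected by an indirect, type-theoretic argument rather than by a naive existential search.

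\emph{Step 3 (recovering $<^*$).} With $X$ now an interval with $\inf X=0$, put $U:=X\cup\{0\}$ (an $\CM$-definable set, since $\{0\}$ is) and define, for $u\ne v$ in $R$,
\[
u\prec v \quad\Longleftrightarrow\quad v-u\in X .
\]
Because $X\sub[0,\delta)$ we have $X\cap(-X)\sub\{0\}$, so no distinct pair is related in both directions; on $U$ the relation is total (for $u\ne v$ in $[0,\delta)$ one of $v-u,\,u-v$ lies in $(0,\delta)$), transitive (if $u,v,w\in[0,\delta)$ and $v-u,\,w-v\in(0,\delta)$ then $w-u\in(0,\delta)$), and irreflexive; hence $\prec{\rest}\,U$ is a linear order, and by construction it is exactly $<{\rest}\,[0,\delta)$. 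Finally, since $\CM$ defines all translations, the orders $<{\rest}\,[c,c+\delta)$ are uniformly $\CM$-definable for $c\in R$, and patching overlapping ones together with the rescalings that become available once an interval order is at hand yields $<$ on every bounded interval; in particular $<^*$ is definable in $\CM$.
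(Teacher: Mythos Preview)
Your overall strategy matches the paper's: first produce an infinite co-infinite $\CM$-definable subset of $R$, then manipulate it with translates to isolate a bounded interval. Step~1 is essentially correct; your route through ``strongly minimal $\Rightarrow$ not locally modular $\Rightarrow$ interprets a field'' works (the field would be algebraically closed by Macintyre, yet real closed since it is definable in $\CR_{omin}$), but the paper short-circuits this by quoting \cite{HOP}: any $1$-dimensional \emph{stable} structure interpretable in an o-minimal one is already $1$-based, so no field detour is needed. From instability one gets (via elimination of $\exists^\infty$ in $\CR_{omin}$) an infinite co-infinite $\CM$-definable $Y\sub R$.

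The genuine gap is in Step~2. You correctly sense that convexification is the issue, but then reach for a saturation/type-theoretic argument that is neither necessary nor clearly sufficient. The paper's solution is entirely elementary and explicit. First it makes a case split you omit: if \emph{both} $Y$ and $R\setminus Y$ are unbounded, then $Y$ contains a ray, say $(-\infty,b_1)$, together with finitely many bounded intervals; translating so $b_1=0$, one checks that $(-Y)\cap Y$ is bounded and symmetric, and intersecting it with a further explicit translate peels off the outermost bounded component. Iterating yields a definable ray, hence the full order $<$. Otherwise one of $Y$, $R\setminus Y$ is bounded; writing the bounded one as $(a_1,b_1)\cup\cdots\cup(a_n,b_n)$ up to a finite set, a single shift by $\alpha=a_n-a_1$ (so that the leftmost endpoint of $Y+\alpha$ is $a_n$) makes $(Y+\alpha)\cap Y$ lie inside $(a_n,b_n)$ and contain a subinterval of it. No compactness, no elementary extensions, no ``recognising intervals inside $\CM$'' are needed: the translates are chosen externally from the finitely many endpoints of $Y$, and the resulting set is $\CM$-definable over those parameters. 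Once a single interval $[0,b]$ is definable, your Step~3 (or simply $x<y\iff y-x\in(0,b]$ on $[0,b]$, then rescaling via $\Lambda_R$) finishes.
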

\begin{proof} It is sufficient to prove that some interval $[0,b]$ is $\mathcal{M}$-definable, for
$b>0$.

\begin{claim}
\label{claim:-is-unstable}
$Th(\mathcal{M})$ is unstable
\end{claim}
\begin{proof} This is based on the work of  Hasson and Onshuus with the second author, \cite{HOP}.

Assume towards contradiction that $Th(\mathcal{M})$ is stable.
By \cite[Theorem 1]{HOP}, every 1-dimensional stable structure interpretable
in an o-minimal structure is necessarily 1-based. So $\mathcal{M}$
is 1-based. By \cite[Theorem 4.1]{HP}, it follows that every $\CM$-definable set
is a boolean combination of cosets of definable subgroups of $R^{n}$. Every
definable subgroup of $\la R^n;+\ra$ in an
o-minimal structure is an $R$- vector subspace of $R^{n}$ and therefore every $\CM$-definable set is definable in $\CR_{lin}$,
a contradiction. Hence $\mathcal{M}$ is unstable.
\end{proof}

Because $\CM$ is unstable, it is in particular
not strongly minimal. This generally implies that in some
elementary extension of $\CM$, we have an $\CM$-definable subset in one variable which is infinite
and co-infinite. However, o-minimal structures eliminate $\exists^{\infty}$, and therefore so does $\CM$. It follows that there is some $\CM$-definable subset of $R$ itself which is infinite and co-infinite. Call this set $Y$.

By o-minimality, $Y$ has the following form:
\begin{equation}
\label{eq:1} Y:=I_{1}\cup I_{2}\cup \cdots\cup I_{n}\cup L,
\end{equation}
such that for every $i\in\{1,...,n\}, I_{i}:=(a_{i},b_{i})$,
$L\:$ is a finite set and in addition $-\infty \leq a_{1}<b_{1}<a_{2}<...<a_{n}<b_{n}\leq+\infty$.
Without loss of generality $L=\0$.

The following will be used in several places in this thesis.
\begin{lem}
\label{Lemma full linear order is definabe-1} Assume that $Y\sub R$ is definable in an o-minimal expansion of an ordered group. If both $Y$ and $R\setminus Y$
are unbounded then full linear order is definable in $\la R;+,Y\ra$.
\end{lem}

\begin{proof}
If both $Y$ and $R\setminus Y$ are unbounded
then $Y$ has the form (\ref{eq:1}) above and then without loss of generality,
we may assume that $I_{1}=(-\infty,b_{1})$, and $I_{i}=(a_{i},b_{i})$
for $i\in\{2,...,n\}$.

By replacing $Y$ by $Y-b_{1}$ we may assume that $b_{1}=0$ and
then
\[
-Y\cap Y=(-b_{n},-a_{n})\cup...\cup(-b_{2},-a_{2})\cup(a_{2},b_{2})\cup...\cup(a_{n},b_{n})
\]
 So $(-Y\cap Y)\cap[(-Y\cap Y)+(a_{n}+b_{n})]$ equals the interval
$I_{n}=(a_{n},b_{n})$ in $Y$. Replace $Y$ by $Y_{1}:=Y\setminus I_{n}$, now $Y_{1}$ contains a unbounded ray together with $n-2$ bounded
intervals. Continuing in this way we obtain a ray $(-\infty,0)$ that
is definable, so we can define $<$.\end{proof}

So we assume now that either $Y$ or $R\setminus Y$ are bounded. If $Y$ is bounded then each of the
intervals above  in $Y$ is bounded. Let $\alpha:=b_{n}-b_{1}$. The set $\:(Y+\alpha)\cap Y$
defines a single interval whose right endpoint is $b_{n}$. So, we
are done.
If $Y$ is unbounded then replace
$Y$ by $R \setminus Y$ and finish as before.
Hence, we showed that $<^{*}$
is definable in $\CM$.\end{proof}

\section{Reducts of $\CR_{alg}$ which are not semilinear}

Here $R$ is a real closed field and $\CR_{alg}=\la R;<,+,\cdot\ra$.
Before the next theorem we recall previous work from \cite{LP} (see a corrected and more general proof in \cite{Belegradek}),
which will be used in its proof.

Given $a>0$ in $ R$, let $I=(-a,a)$. Denote by $+^{*}$
the partial function obtained by intersecting the graph of $+$ with
$I^{3}$, and for each $\alpha\in R$, let $\lambda_{\alpha}^{*}$
be the partial function obtained by intersecting the graph of $\lambda_{\alpha}$
with $I^{2}$. Finally, let $<^{*}$ be the restriction of $<$ to
$I^{2}$. Notice that for each $X\subseteq R^{n}$ such that $\la R;<+,\cdot,X\ra$ is o-minimal,
the structure $$\mathcal{I}=\langle \mathrm{I};<^*,+^{*},\{\lambda^*_{\alpha}\}_{\alpha\in R}, X\cap I^n \rangle $$
is o-minimal as well.

%Let us fix such a semialgebraic set $X$.

In \cite{LP} the structure $\langle I;<^*,+^{*}\rangle $
was called a group-interval and its o-minimal expansions were studied
there.

A partial endomorphism (p.e. in short) of this group-interval was
a function $f:I\rightarrow I$ which respects addition when defined:
namely, if $x,y,x+^{*}y\in I$ then $f(x+^{*}y)=f(x)+^{*}f(y)$.

Notice that in our setting every $\mathcal{I}$-definable
p.e. is necessarily the restriction of $\lambda_{\alpha}$ for some
$\alpha\in R$. Indeed, if $f:I\rightarrow I$ is
an $\mathcal{I}$-definable p.e. then it is not hard to verify that
the following is a semialgebraic subgroup of $\langle R,+ \rangle $
which contains all integers. Let
\[
H=\{r\in R:\exists\varepsilon>0\forall x\in(-\varepsilon,\varepsilon)f(rx)=rf(x)\}.
\]

O-minimality of the real field implies that $H= R$ and therefore $f$ is the
restriction of an $R$-linear map, namely the restriction
of $\lambda_{\alpha}$ for some $R$.

Now, without going through the precise definition of ``a linear theory''
from \cite{LP}, it was shown in \cite[Proposition 4.2]{LP}
that if $Th(\mathcal{I})$ is linear then every $\mathcal{I}$-definable
set is already defined in the structure $\langle I;+^{*},<^{*},\{\lambda_{\alpha}^*\}_{\alpha\in R}\rangle ,$ (together
possibly with additional parameters). Thus if $Th(\mathcal{I})$ is
linear then $X\bigcap I^{n}$ is a semilinear set.

The following proposition seems to be obvious but for the
sake of completion we include a proof in the Appendix.

\begin{fact}
\label{prop: there exists open box}
Let $R$ be a real closed field and  $X\subseteq R^{n}$
a definable set in an o-minimal expansion of $\la R;<,+,\cdot \ra$.
If $X$ is not semilinear then, in the structure $\CM=\la R;<^*,+,\Lambda_R,X\ra$,
there exists a definable bounded set which is not semilinear.
\end{fact}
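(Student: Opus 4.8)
The plan is to reduce the statement to the purely geometric fact that \emph{non-semilinearity is a local phenomenon}. First note that every bounded open box of $R^{n}$ is already definable in $\CM$: for $c<d$ in $R$ the map $t\mapsto c+(d-c)t$ is the composition of a map from $\Lambda_R$ with a translation, hence $\CM$-definable, and it is an order-preserving bijection of $(0,1)$ onto $(c,d)$, so transporting $<^{*}$ along it yields the restriction of $<$ to $(c,d)^{2}$ as an $\CM$-definable relation; projecting, the interval $(c,d)$ itself is $\CM$-definable, and hence so is any finite product of bounded intervals. Therefore $X\cap B$ is $\CM$-definable and bounded for every bounded box $B\sub R^{n}$, and it is enough to exhibit one bounded box $B$ with $X\cap B$ not semilinear; equivalently, to prove the contrapositive: if $X\cap B$ is semilinear for every bounded box $B$, then $X$ is semilinear.

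I would obtain this from a localization lemma proved by induction on $d\ge 0$: \emph{if $Y\sub R^{n}$ is definable in the ambient o-minimal structure, $\dim Y\le d$, and $Y$ is locally semilinear at every point of $R^{n}$ (meaning that each $p$ has an open box $B\ni p$ with $Y\cap B$ semilinear), then $Y$ is semilinear.} The case $d=0$ is trivial, and if $\dim Y\le d-1$ we are done by induction, so assume $\dim Y=d\ge 1$. Let $A$ be the closure of the union of the frontier $\overline Y\setminus Y$ with the set of points of $Y$ near which $Y$ is not a $C^{1}$-submanifold of dimension $d$; then $A$ is closed, $\dim A\le d-1$, and — each of these being a local construction — $A$ is again locally semilinear at every point, so by the inductive hypothesis $A$, and hence also the lower-dimensional set $Y\cap A$, is semilinear. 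It remains to treat $Y\setminus A$: it is a definable $C^{1}$-submanifold of dimension $d$ with finitely many definably connected components, and since $A$ is closed and semilinear (and connected components of semilinear sets are semilinear), each such component $M$ is again locally semilinear at every point of $R^{n}$.

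So fix a component $M$. Near any $p\in M$ we have $Y\cap B=M\cap B=S\cap B$ for some semilinear $S$ and some small box $B\ni p$; since a semilinear set agrees, near $p$, with a cone of vertex $p$, and this cone is a $C^{1}$-submanifold of dimension $d$, it must coincide near $p$ with an open piece of the affine $d$-plane $p+T_{p}M$. Hence the map $q\mapsto T_{q}M$ is locally constant, so constant on the definably connected set $M$, so $M$ lies in a single affine $d$-plane $H$. Identifying $H$ with $R^{d}$ by an affine (hence semilinear) map, $M$ becomes a definable open subset of $R^{d}$ which is still locally semilinear at every point; its frontier in $R^{d}$ has dimension $\le d-1$ and is locally semilinear, hence semilinear by the inductive hypothesis, and $M$, being a union of definably connected components of the complement of that frontier, is therefore semilinear. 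Thus $Y\setminus A$ is semilinear, and so is $Y$. Applying the contrapositive of the lemma to $Y=X$, together with the first paragraph, gives the Fact.

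The step I expect to be the real obstacle is precisely this local-to-global passage: one must rule out that $X$ looks like a piecewise affine set with infinitely many pieces escaping to infinity, and the only reason this cannot occur is that $X$ is definable in an o-minimal structure — a definable $C^{1}$ function that is locally affine has locally constant, hence constant, derivative on each definably connected piece of its domain. The remaining ingredients (cell decomposition, to see that $A$ is definable of dimension $<d$ and that $Y\setminus A$ has finitely many components; the conical description of semilinear sets near a point; the ensuing fact that a locally affine $C^{1}$-submanifold lies in an affine subspace) are routine o-minimal and semilinear tameness, and the only genuine care needed is to organize the induction so that the frontier and the non-regular locus are disposed of strictly before the set itself.
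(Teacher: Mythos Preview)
Your argument is correct, and it in fact establishes the stronger statement that the paper mentions at the very end of its Appendix but chooses not to prove: there is a bounded open box $B$ with $X\cap B$ not semilinear. The paper's own proof is organized differently. It works entirely inside $\mathcal{M}$, using the $\mathcal{M}$-definable ``affine part'' $\mathscr{A}(X)$ and the germ-equivalence $\sim$ rather than $C^{1}$-regularity and tangent spaces; when $\mathscr{A}(X)$ fails to be dense it intersects with a box as you do, but when $X=\mathscr{A}(X)$ it passes to a single $\sim$-class, lands in a coset of an $R$-linear subspace $L$, and applies induction to $Fr(X)$, producing an $\mathcal{M}$-definable bounded non-semilinear set that need not be of the form $X\cap B$.

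What each approach buys: the paper's route keeps every construction visibly $\mathcal{M}$-definable (only $+$, $\Lambda_R$, $<^{*}$ and $X$ are used), which matches the statement as phrased; your route uses $C^{1}$ cell decomposition in the ambient o-minimal expansion, but pays this back by yielding the sharper local conclusion and by making the ``locally affine $\Rightarrow$ lies in an affine plane'' step a clean tangent-space argument. The underlying skeleton --- induction on dimension, strip off a lower-dimensional bad locus, observe that what remains is locally flat and hence sits in a single affine plane, then recurse on the frontier --- is the same in both.

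Two small points worth making explicit in your write-up: when you say ``hence also the lower-dimensional set $Y\cap A$ is semilinear'', this is a second invocation of the inductive hypothesis (applied to $Y\cap A$, which is locally semilinear because both $Y$ and $A$ are), not a consequence of $A$ being semilinear alone; and the local conic structure of semilinear sets you invoke is exactly the observation that near any point only the constraints tight at that point matter, so the germ is a polyhedral cone --- standard, but worth a sentence since it is the hinge of the tangent-space step.
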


\begin{thm}
\label{thm4:If--is}If $X\subseteq R^{n}$
is semialgebraic and not definable in $\mathcal{R}_{semi}$, then
every bounded $R$-semialgebraic set is definable in $\langle R;+,\Lambda_R,X \rangle$
\end{thm}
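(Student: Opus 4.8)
The plan is to reduce the statement to producing a single bounded semialgebraic set which is not semilinear, and then to bootstrap from that one set to all bounded semialgebraic sets using the linear map structure $\Lambda_R$ together with the already-available order $<^*$ on a bounded interval.

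First I would invoke Fact~\ref{prop: there exists open box}: since $X$ is semialgebraic and not semilinear, in the structure $\CM=\la R;<^*,+,\Lambda_R,X\ra$ there is a definable bounded set $Z\sub R^n$ which is not semilinear. Note that $<^*$ itself is definable in $\la R;+,\Lambda_R,X\ra$ by Theorem~\ref{thm:<^* is definable in the vector space} (the hypothesis that $X$ is not $\CR_{semi}$-definable certainly implies $\CM$ is not a reduct of $\CR_{lin}$), so working in $\CM$ is the same as working in $\la R;+,\Lambda_R,X\ra$. Now I would restrict attention to a small enough box $I^n$, $I=(-a,a)$, so that $Z\cap I^n$ is still not semilinear; then the o-minimal structure $\mathcal I=\la I;<^*,+^*,\{\lambda_\alpha^*\}_{\alpha\in R}, Z\cap I^n\ra$ is an o-minimal expansion of the group-interval $\la I;<^*,+^*\ra$. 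Because $Z\cap I^n$ is not semilinear, the discussion preceding Fact~\ref{prop: there exists open box} (the cited \cite[Proposition 4.2]{LP}) tells us that $Th(\mathcal I)$ is \emph{not} a linear theory: if it were, every $\mathcal I$-definable set, in particular $Z\cap I^n$, would be semilinear.

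The heart of the argument is then to extract genuine multiplication from the non-linearity of $\mathcal I$. By the structure theory of o-minimal expansions of group-intervals from \cite{LP} (and its correction/extension in \cite{Belegradek}), a non-linear o-minimal expansion of $\la I;<^*,+^*\ra$ interprets — in fact, locally defines on some subinterval — an ordered field whose domain is a subinterval of $I$ and whose addition agrees with $+^*$ on that subinterval. Concretely, I would produce a subinterval $J\subseteq I$ and an $\mathcal I$-definable multiplication $\odot:J\times J\to R$ making $\la J; <^*, +^*, \odot\ra$ a real closed field-interval; this is exactly the trichotomy/Peterzil--Starchenko-type phenomenon specialized to the group-interval setting carried out in \cite{LP}. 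Pulling this back to $\CM=\la R;+,\Lambda_R,X\ra$: the set $Z$, hence this locally defined field multiplication, is definable there, so some restriction of the real field multiplication $\cdot$ to a bounded box is definable in $\la R;+,\Lambda_R,X\ra$.

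Finally I would globalize. Once a restriction of $\cdot$ to a bounded box $[-c,c]^2$ is definable in $\la R;+,\Lambda_R,X\ra$, I can define, for any bounded semialgebraic $W\sub R^m$, its defining semialgebraic formula using only $+$, the scalars $\Lambda_R$, and this bounded piece of multiplication: a bounded semialgebraic set can be described by polynomial (in)equalities evaluated on a bounded region, and by rescaling via the maps $\lambda_\alpha$ every such polynomial evaluation on a bounded box reduces to finitely many applications of the bounded multiplication together with additions and scalar multiplications — the intermediate products stay bounded after suitable rescaling, and rescaling is available because $\Lambda_R$ is present. Hence every bounded $R$-semialgebraic set is definable in $\la R;+,\Lambda_R,X\ra$. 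The main obstacle I expect is precisely this last globalization bookkeeping — showing that all bounded semialgebraic sets, not merely those over some subfield or of bounded complexity, follow from a single bounded multiplication box — together with citing the right form of the group-interval structure theory from \cite{LP}/\cite{Belegradek} that yields a field on a subinterval rather than merely ruling out linearity.
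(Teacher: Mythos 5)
Your route is the same as the paper's up to the last step: invoke Fact~\ref{prop: there exists open box} to get a bounded non-semilinear definable set, pass to the group-interval $\mathcal I$, conclude that $Th(\mathcal I)$ is non-linear via \cite[Proposition 4.2]{LP}, extract a real closed field on a subinterval $J$, and then rescale arbitrary bounded sets into $J$ using $\Lambda_R$. (Minor citation point: the paper gets the definable real closed field from the Peterzil--Starchenko trichotomy, \cite[Theorem 1.2]{PS1}, not from \cite{LP}/\cite{Belegradek}; the latter only supply the implication ``linear theory $\Rightarrow$ all definable sets semilinear'' whose contrapositive you use.)

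The genuine gap is the sentence where you pass from ``a locally defined field multiplication $\odot$ on $J$ is definable'' to ``some restriction of the real field multiplication $\cdot$ to a bounded box is definable in $\la R;+,\Lambda_R,X\ra$.'' The trichotomy theorem hands you \emph{some} definable real closed field $\mathcal J=\la J,\varoplus,\varodot\ra$; a priori its operations are just definable functions with no stated relation to the ambient $\cdot$, and nothing you have said identifies $\varodot$ with a rescaling of $\cdot$. Your entire globalization step (evaluating polynomials with coefficients from $R$ on a rescaled box) presupposes that the graph of the \emph{ambient} multiplication, restricted to a box, is definable, so the argument does not close without this identification. This is exactly the role of \cite[Corollary 2.4]{Pet1} in the paper's proof: it asserts that every $R$-semialgebraic subset of $J^k$ is definable in $\mathcal J$, which in particular yields the restricted ambient multiplication and, at the same time, makes your hand-rolled polynomial bookkeeping unnecessary --- one simply contracts any bounded $B\sub(-b,b)^n$ into $J^n$ by a map from $\Lambda_R$ and quotes that corollary. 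If you want to keep the explicit bookkeeping, you still need to either cite this corollary or prove that the definable field on $J$ is definably isomorphic to (a segment of) $R$ with its semialgebraic structure; also note that your comparisons ``$p(\bar y)>0$'' are only legitimate because the values $p(\bar y)$ range over a bounded set on which a rescaled copy of $<^*$ is available --- the full order is not.
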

\begin{proof}
Let $\mathcal{M}:= \langle R;+,\Lambda_R,X \rangle $.
By Theorem \ref{thm:<^* is definable in the vector space}, the relation
$<^{*}$ is definable in $\mathcal{M}$. Let us first see that $\CM$
defines a real closed field on some interval.

By Fact \ref{prop: there exists open box}, we may assume that $X\cap I^n$ is not semilinear, for some bounded interval $I=(-,a,a)$.
Consider the o-minimal structure $$\mathcal{I}:=\langle I;<^{*},+^*,\{\lambda_{\alpha}^{*}\}_{\alpha\in R},X\cap I^{n} \rangle,$$
as we described before stating the theorem. We noted
that if $Th(\mathcal{I})$ is linear then the set $X\bigcap I^{n}$
must be semilinear set. Because $I^{n}\cap X$ is not semilinear
then $Th(\mathcal{I})$ is not linear in the sense of \cite{LP}
and therefore by  \cite[Theorem 1.2]{PS1}, a real closed field is $\CI$-definable, hence also $\CM$-definable, on some interval $J\subseteq I$.

Without loss of generality, assume that $J=(-a_{0},a_{0}),\:a_{0}>0$.
Denote the field by:
\[
\mathcal{J}=\langle \mathit{J},\varoplus,\varodot \rangle
\]

The structure $\mathcal{J}$ is $\mathcal{M}$-definable.
By \cite[Corollary 2.4]{Pet1}, every $R$-semialgebraic subset
of $J^{k}$, $k\in\mathbb{N}$, is definable in $\mathcal{J}$,
and therefore in $\mathcal{M}$.

Let  $B\subseteq(-b,b)^{n}$ for some $b>0$ in $ R$. Using
scalar multiplication from $\Lambda_R$,  we can contract $(-b,b)$  into $(-a_{0},a_{0})$,
so it is definable in $\mathcal{J}$. It follows that $B$
is definable in $\mathcal{M}$ .
\end{proof}

\section{Strongly bounded structures }

The ultimate goal of this section is to prove:
\begin{thm}
\label{thm:< is definable in R}  Let $R$ be a real closed field. If $X\sub R^n$
is semialgebraic and not definable in $\CR_{bd}=\langle R;<^{*},+,\Lambda_R,\mathfrak B_{sa}\rangle $
then $<$ is definable in the structure $\langle R;+,\Lambda_R, X \rangle $.
\end{thm}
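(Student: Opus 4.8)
The plan is to prove the contrapositive: supposing that $<$ is not definable in $\CM:=\la R;+,\Lambda_R,X\ra$, I would show that $X$ is definable in $\CR_{bd}=\la R;<^{*},+,\Lambda_R,\mathfrak B_{sa}\ra$.

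First I would observe that, under this hypothesis, $\CM$ is a \emph{strongly bounded} reduct of $\CR_{alg}=\la R;<,+,\cdot\ra$. Indeed, since $X$ is semialgebraic, $\CM$ is a reduct of $\CR_{alg}$, so every $\CM$-definable $Y\sub R$ is semialgebraic, hence a finite union of points and open intervals; were such a $Y$ neither bounded nor co-bounded, both $Y$ and $R\setminus Y$ would be unbounded, and Lemma \ref{Lemma full linear order is definabe-1} would then make $<$ definable in $\la R;+,Y\ra$, hence in $\CM$, contradicting the hypothesis. (Should $\CM$ in fact already be a reduct of $\CR_{lin}=\la R;+,\Lambda_R\ra$, then $X$ is semilinear and \emph{a fortiori} definable in $\CR_{bd}$ and there is nothing to prove; otherwise Theorem \ref{thm:<^* is definable in the vector space} hands us $<^{*}$, though the argument below does not need this.)

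Next I would apply the structure theorem for strongly bounded additive reducts of o-minimal expansions of ordered groups, stated in the introduction (Theorem \ref{thm: main results}), to the o-minimal structure $\CR_{alg}$ and its strongly bounded reduct $\CM=\la R;+,\Lambda_R,X\ra$: it gives that every $\CM$-definable subset of $R^n$ is already definable in $\la R;+,\Lambda_\CM,\mathfrak B^{*}\ra$, where $\Lambda_\CM$ is the collection of $\CM$-definable endomorphisms of $\la R;+\ra$ and $\mathfrak B^{*}$ the collection of $\CM$-definable bounded sets. Then I would identify these data inside $\CR_{bd}$. Each $f\in\Lambda_\CM$ is a semialgebraic additive self-map of $R$, hence equals some $\lambda_\alpha\in\Lambda_R$ by o-minimality of the real closed field (the semialgebraic-subgroup argument recalled in the preceding section applies, with $H=R$), so $\Lambda_\CM\sub\Lambda_R$; and each set in $\mathfrak B^{*}$ is bounded and semialgebraic, hence lies in $\mathfrak B_{sa}$. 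Consequently $\la R;+,\Lambda_\CM,\mathfrak B^{*}\ra$ is a reduct of $\la R;+,\Lambda_R,\mathfrak B_{sa}\ra$, which is a reduct of $\CR_{bd}$; so every $\CM$-definable set, $X$ included, is definable in $\CR_{bd}$, which is exactly the contrapositive statement.

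The genuinely hard input is the structure theorem invoked above — that is the heart of this section, and everything else is bookkeeping. Within this deduction the one place that needs real care is the passage in the first step from ``$<$ not definable in $\CM$'' to ``$\CM$ strongly bounded'', which is precisely what Lemma \ref{Lemma full linear order is definabe-1} secures once one knows $\CM$'s definable subsets of $R$ are semialgebraic. (As a consistency check, in the case where $X$ is not semilinear, Theorem \ref{thm4:If--is} already shows that $\CM$ defines all of $\mathfrak B_{sa}$, matching the conclusion, though this is not needed for the proof.)
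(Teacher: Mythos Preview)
Your proposal is correct and follows exactly the paper's route: pass to the contrapositive, use Lemma~\ref{Lemma full linear order is definabe-1} (packaged in the paper as Lemma~\ref{Lemma:stronglu bounded}) to conclude $\CM$ is strongly bounded, invoke Theorem~\ref{thm: main results}, and then observe $\Lambda_\CM\sub\Lambda_R$ and $\mathfrak B^{*}\sub\mathfrak B_{sa}$ so that $\CM_{bd}\,\dot\subseteq\,\CR_{bd}$. One small correction: your parenthetical claim that the argument ``does not need'' $<^{*}$ is misleading, since Theorem~\ref{thm: main results} is stated under the standing assumption that $<$ is $\0$-definable on every $\0$-interval; your own case split already secures this (via Theorem~\ref{thm:<^* is definable in the vector space} when $\CM$ properly expands $\CR_{lin}$, combined with $\Lambda_R$ to scale), so the proof stands, but the remark should be dropped.
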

%Notice that $<^{*}$ on $I$ is actually one of the bounded semialgebraic
%sets, so from now on $\CR_{bd}=\langle R;+,\Lambda_R,\mathfrak B \rangle $.

%\paragraph*{\textmd{We introduce the following %notion.}}
%\begin{defn}
%Let $\mathcal M=\langle R;\cdots \rangle $
%be a reduct of a linearly ordered  structure %$\la M;<,\cdots\ra$. The structure $\mathcal{M}$
%is called \emph{strongly bounded} if every %$\mathcal{M}$-definable
%subset $X\sub M$ is either bounded or %co-bounded. Namely, there exists some %$<$-interval $(a,b)\sub M$, $a,b\in M$,  such %that either
%$X\sub (a,b)$, or $M\setminus X\sub (a,b)$
%\end{defn}

We are going to work in a more general setting than that of a real closed field.
Recall that a strongly bounded reduct of a linearly ordered $\la R;<,\cdots \ra$  is one in  which every definable subset of $R$ is bounded or co-bounded.
Below, we will mostly be interested in strongly bounded reducts of o-minimal structures.
By Lemma \ref{Lemma full linear order is definabe-1} we
have:
\begin{lem}\label{Lemma:stronglu bounded} Let $\CR_{omin}=\la R;<,+, \cdots\ra$ be an o-minimal expansion of an ordered group.
If $\mathcal{M}=\langle R;+,\cdots \rangle $
is a reduct of $\CR_{omin}$ then
$\mathcal{M}$ is strongly bounded if and only if $<$ is not definable
in $\mathcal{M}$.
\end{lem}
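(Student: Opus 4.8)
The plan is to package both implications from Lemma~\ref{Lemma full linear order is definabe-1}, which carries all the substantive content; the present statement is essentially a reformulation of it. Throughout, write $P=\{x\in R: x>0\}$ for the positive cone of the ordered group.

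For the direction ``$\CM$ strongly bounded $\Rightarrow$ $<$ not definable in $\CM$'', I would argue by contraposition. If $<$ were definable in $\CM$, then $P$ is $\CM$-definable; but $P$ is unbounded in $R$ (above), while $R\setminus P=\{x\in R:x\leq 0\}$ is also unbounded (below), so $P$ is neither bounded nor co-bounded, contradicting strong boundedness of $\CM$. Hence $<$ is not definable in $\CM$.

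For the converse, ``$<$ not definable in $\CM$ $\Rightarrow$ $\CM$ strongly bounded'', again argue by contraposition: suppose $\CM$ is not strongly bounded, so there is an $\CM$-definable set $X\sub R$ that is neither bounded nor co-bounded. Since $\CM$ is a reduct of the o-minimal structure $\CR_{omin}$ and $+$ is an $\CR_{omin}$-operation, the structure $\la R;+,X\ra$ is itself a reduct of $\CR_{omin}$, and in particular $X$ is definable in an o-minimal expansion of an ordered group. Now ``$X$ not bounded'' says $X$ is unbounded in $R$, and ``$X$ not co-bounded'' says $R\setminus X$ is unbounded in $R$; these are precisely the hypotheses of Lemma~\ref{Lemma full linear order is definabe-1} with $Y=X$. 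Its conclusion gives that the full order $<$ is definable in $\la R;+,X\ra$, hence in $\CM$, which is the contrapositive we wanted.

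The only point requiring any care is the bookkeeping in the second paragraph: verifying that ``neither bounded nor co-bounded'' matches exactly the hypothesis ``both $X$ and $R\setminus X$ unbounded'' of Lemma~\ref{Lemma full linear order is definabe-1} (reading ``unbounded'' as ``not bounded,'' which is how that lemma is used), and that $\la R;+,X\ra$ genuinely qualifies as (a reduct of) an o-minimal expansion of an ordered group so that the lemma applies. There is no real obstacle beyond this: the work of extracting an unbounded ray, and thence $<$, from such a set $X$ by translating endpoints to $0$ and intersecting translates is already done inside the proof of Lemma~\ref{Lemma full linear order is definabe-1}.
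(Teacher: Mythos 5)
Your proof is correct and follows the paper's intended route exactly: the paper derives this lemma directly from Lemma~\ref{Lemma full linear order is definabe-1}, with the forward direction being the trivial observation that a definable order yields a definable unbounded, co-unbounded set. The bookkeeping you flag (matching ``neither bounded nor co-bounded'' to the hypotheses of that lemma, and noting that $\la R;+,X\ra$ is a reduct of $\CR_{omin}$) is exactly what the paper leaves implicit.
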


So in order to prove Theorem \ref{thm:< is definable in R} it is sufficient to prove that if $X\sub R^n$ is definable in a strongly bounded $\mathcal{M}=\langle R;<,+,\cdots \rangle$ then $X$ is definable in $\la R;+,\Lambda_\CM, \mathfrak B_\CM\ra $, where $\mathfrak B_\CM$ is the collection of  all $\CM$-definable bounded sets. A more precise and slightly stronger theorem will be proved soon, Theorem \ref{thm: main results}. We first make a general observation which we shall exploit repeatedly.
%\begin{thm}
%\label{THM THE OTHER DIRECTION} If $\mathcal{M}=\langle R;+,\cdots \rangle$
%is strongly bounded and $X\subseteq R^{n}$ is $\mathcal{M}$-definable
%then $X$ is definable in $\CR_{bd}=\langle R;+,<^{*},\Lambda_R,\mathfrak B\rangle $.
%\end{thm}
%In fact, we prove below a stronger result, Theorem \ref{thm: main result}.

%Recall that for a structure $\mathcal{M}$ on $R$, $\mathfrak{B}_{\mathcal{M}}$ is the expansion of %$\langle R;+,\{\lambda_{\alpha}\}_{\alpha\in R}\rangle$ by all
% $\mathcal{M}$-definable bounded subset of $R^{n}$, $n\in\mathbb{N}$.

\subsection{Definability of ``boundedness''}
For $X\subseteq T\times R^{n}$, $T\subseteq R^{m}$ and  $t\in T$, we let
\[
X_{t}=\{a\in R^n :\langle t,a\ra \in X\}
\]

%\begin{defn}\todo{We might not need this eventually}
%We say that\emph{ the parameterized family $\{X_{t}:t\in T\}$ is
%definable in a structure $\mathscr{A}$} if $T$ is definable in $\mathscr{A}$
%and there is an $\mathscr{A}$-formula $\psi(\overline{x},t)$ such
%that for every $t\in T$ and $\overline{a}\in\mathbb{R}^{n},\:\overline{a}\in X_{t}\iff\psi(\overline{a},t)$
%\end{defn}
%(Notice that the above is stronger than saying that the family of sets $\{X_t:t\in T\}$ is definable, because in the above we want to keep the same parameterization).

%Thus, a set $X\subseteq T\times R^{n}$ is
%definable in a structure $\mathscr{A}$ if and only if the parameterized
%family $\{X_{t}:t\in T\}$ is definable in $\mathscr{A}$.

The following general result will be very useful here.
\begin{prop}
\label{prop:bounded-definable}
Let $\mathcal{M}=\langle R;+,\cdots \rangle $ be any reduct of an o-minimal expansion of an ordered group. If $\{X_{t}:\:t\in T\}$ is
an $\mathcal{M}$-definable family of subsets of $R^{n}$,
then the set $$\{t\in T:\:X_{t}\:\textrm{is\:bounded in $R^n$}\}$$ is definable
in $\mathcal{M}$.
\end{prop}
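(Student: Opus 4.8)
The plan is to reduce the statement to the definability, in $\CM$, of the set
\[
S:=\{\,t\in T:\ X_t\ \text{is bounded in}\ R^n\,\},
\]
which is certainly definable in the ambient o-minimal expansion $\CR_{omin}=\la R;<,+,\cdots\ra$, and then to descend that definability to the reduct $\CM$. (If $\CM$ already defines $<$ there is nothing to prove, so the content is the case where it does not.) First I would dispose of the dependence on $n$: since $X_t$ is bounded in $R^n$ if and only if each coordinate projection $\pi_i(X_t)\sub R$ is bounded, and $\{\pi_i(X_t):t\in T\}$ is again an $\CM$-definable family, it suffices to treat $n=1$. Fix an $\CM$-formula with parameters $\bar b$ defining the family; since this is in particular a formula of $\CR_{omin}$, the condition ``$\exists N\,\forall a\,(a\in X_t\to -N\le a\le N)$'' shows that $S$ is $\CR_{omin}$-definable \emph{over the same parameters} $\bar b$. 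So what remains is to show that this $\CR_{omin}$-definable set is already $\CM$-definable over $\bar b$.

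As in the Remark following the definition of strong boundedness, work in a sufficiently saturated elementary extension $\CM^{*}$ of $\CM$, chosen of the form $\CM^{*}\preceq\CR_{omin}^{*}$ with $\CR_{omin}^{*}$ an o-minimal expansion of an ordered group and $\CM^{*}$ the corresponding reduct; it is enough to prove that the interpretation of $S$ in $\CM^{*}$ is $\CM^{*}$-definable over $\bar b$. The key point is an invariance claim: \emph{whether $X_{c}$ is bounded depends only on $\mathrm{tp}^{\CM^{*}}(c/\bar b)$}, i.e.\ $S$ is invariant under $\mathrm{Aut}(\CM^{*}/\bar b)$. Granting this, the conclusion is soft: invariance exhibits $S$ as a union of complete $\CM^{*}$-types over $\bar b$, and likewise its complement in $T$; combined with the fact that $S$ is $\CR_{omin}^{*}$-definable, a routine compactness argument (for $c\notin S$ the partial $\CM^{*}$-type of $c$ over $\bar b$ together with the formula defining $S$ is inconsistent, hence already one $\CM^{*}$-formula of that type is) shows that $S$ is $\CM^{*}$-type-definable over $\bar b$, and so is $T\setminus S$; a subset of a $\bar b$-definable set that is both type-definable and co-type-definable over $\bar b$ is $\bar b$-definable, and the resulting formula restricts to the one wanted in $\CM$.

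The main obstacle is therefore the invariance claim, equivalently the statement that no $\sigma\in\mathrm{Aut}(\CM^{*}/\bar b)$ carries a bounded member $X_{c}$ of the family to an unbounded member $X_{\sigma(c)}=\sigma(X_{c})$. This genuinely uses that the $X_t$ form a definable family: an arbitrary additive bijection can send a bounded set to an unbounded one, but, as one already sees for $\CM=\la R;+\ra$ (where a bounded definable subset of $R$ is finite), definable members are rigid enough to forbid this. I would prove the claim by contradiction: assuming $c\equiv^{\CM^{*}}_{\bar b}c'$ with $X_{c}$ bounded and $X_{c'}$ unbounded, extract an $\CM^{*}$-indiscernible sequence $(c_i)_{i<\omega}$ over $\bar b$ of realizations of the common type, and analyze the $\CR_{omin}^{*}$-definable family $\{X_{c_i}\}$ using o-minimal cell decomposition and curve selection in $\CR_{omin}^{*}$, together with the hypothesis that $\CM^{*}$ expands an ordered group (so that its Boolean algebra of definable sets is closed under $+$). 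The mismatch between bounded and unbounded fibers inside an $\CM^{*}$-indiscernible family should then be converted into a configuration incompatible with the tameness of $\CR_{omin}^{*}$ — for instance an $\CM^{*}$-definable set violating elimination of $\exists^{\infty}$, or an $\CM^{*}$-definable map with an inadmissible unbounded image — which is the contradiction. This geometric step is the heart of the argument; the reduction to $n=1$, the passage to $\CR_{omin}^{*}$, and the concluding compactness argument are all routine.
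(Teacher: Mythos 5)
Your reduction to $n=1$ via coordinate projections is exactly the paper's first step, and your concluding compactness argument (invariant $+$ ambient-definable $\Rightarrow$ type-definable and co-type-definable $\Rightarrow$ definable in a saturated model) is standard and would work. But the entire content of the proposition is concentrated in your ``invariance claim'' --- that boundedness of $X_c$ depends only on $\mathrm{tp}^{\CM^*}(c/\bar b)$ --- and for that you offer only a strategy (indiscernibles, cell decomposition, ``a configuration incompatible with tameness''), not a proof. As written this is a genuine gap: it is not at all clear how the mismatch between a bounded and an unbounded fiber along an $\CM^*$-indiscernible sequence produces a violation of $\exists^\infty$-elimination or any other concrete contradiction, and you give no argument.

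The paper closes exactly this gap with a two-line observation that makes all the model-theoretic machinery unnecessary: by o-minimality a definable $X_t\sub R$ is unbounded if and only if it contains an unbounded ray, and hence
\[
\{t\in T:\ X_t\ \text{is bounded}\}=\{t\in T:\ \exists a\ \ (a+X_t)\cap X_t=\emptyset\},
\]
since a bounded set is disjoint from a sufficiently large translate of itself, while a set containing a ray meets every translate of itself. The right-hand side is expressed using only $+$ and the formula defining the family, so it is $\CM$-definable outright (and, incidentally, this identity is also the cleanest way to prove your invariance claim, at which point your automorphism/compactness descent becomes superfluous). You should replace the sketched ``geometric step'' with this translation argument, or find some other actual proof of the invariance claim; until then the proposal does not establish the proposition.
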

\begin{proof}
Note that a set $Y\subseteq R^{n}$
is bounded if and only if for each $i$, the image of $Y$ under the
projection map $\pi_{i}:\left\langle y_{1},...,y_{n}\right\rangle \longmapsto y_{i}$
is bounded in $R$. Thus, it is sufficient to prove the result under the assumption that all $X_t$ are subsets of $R$.

By o-minimality, each $X_t\sub R$ is unbounded if and only it contains an unbounded ray. Thus,
it is easy to see that
\[
\{t\in T:\:X_{t}\textrm{ is bounded}\}=\{t\in T:\:\exists a\:\:a+X_{t}\cap X_{t}=\emptyset\}
\]
and hence the set is definable in $\mathcal{M}$.
\end{proof}

\subsection{The strongly bounded setting}
We first clarify and somewhat generalize our setting.
%For one, instead of working in reducts of real closed fields, or o-minimal expansions of such, we
%allow ourselves to work in (reducts of) o-minimal expansions of ordered groups.
\label{setting assumptions}

\vspace{.2cm}

\emph{Let $\CR_{omin}=\la R,<,+,\cdots \ra$ denote an o-minimal expansion of an ordered group in language $\CL_{omin}$, and let $\CM=\la R;+,\cdots\ra$ denote a strongly bounded reduct of $\CR_{omin}$, in language $\CL$, such that $acl_\CM(\0)$ contains at least one nonzero element.}

\begin{defn} An interval $(a,b)\sub R$ is called a \emph{$\0$-interval in $\CM$} if $a,b\in acl_\CM(\0)$. A subset $X\sub R^n$ is called {\em $\0$-bounded
in $\CM$} if $X$ is contained in some $I^n$, for $I$ a $\0$-interval in $\CM$.
\end{defn}

  {\em Our standing assumption is that for every $\0$-interval $I\sub R$, the restricted order $<\rest \! \! I$ is $\0$-definable in $\CM$.
  Notice that, using Theorem \ref{thm:<^* is definable in the vector space},  this is  true when $\CM$ is elementarily equivalent to a reduct of a real closed field which properly expands $\CR_{lin}$.}

  \vspace{.2cm}

  We let $\Lambda_{\CM}$ be the collection of all $\CM$-definable
endomorphisms of $\la R,+\ra$, defined over $\0$.
We let $\CL_{bd}(\CM)$ be the language consisting of $\{+,\{\lambda\}_{\lambda\in \Lambda_{\CM}}\}$,  augmented by a predicate for every $\0$-definable, $\0$-bounded set in $\CM$.

By expanding $\CL$ and $\CL_{omin}$ by function symbols and predicates for $\0$-definable sets, we may assume that
$$\CL_{bd}\sub \CL\sub \CL_{omin}.$$ We let $\CM_{bd}$ be the reduct of $\CM$ to $\CL_{bd}$.

\vspace{.2cm}
Our ultimate goal in this section is to prove:
\begin{thm}\label{thm: main results} For $\CM$ strongly bounded as above, every definable subset of $R^n$ is definable in $\CM_{bd}$.\end{thm}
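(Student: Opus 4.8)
The plan is to prove Theorem \ref{thm: main results} by induction on $n$, showing that every $\CM$-definable subset of $R^n$ is already $\CM_{bd}$-definable (allowing parameters throughout, so we may as well add them to the base and work over $\0$). The base case $n=1$ is essentially the content of the strongly bounded hypothesis together with the standing assumption: an $\CM$-definable $X\sub R$ is either bounded or co-bounded; if bounded it lies in some $\0$-bounded set after shifting by a definable parameter, and the restricted order on a $\0$-interval containing it is $\0$-definable, so by o-minimality $X$ is a finite union of $\0$-intervals and points within that interval — each of these is in $\CL_{bd}$. If $X$ is co-bounded, apply the same reasoning to $R\setminus X$. So the real work is the inductive step.

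For the inductive step, let $X\sub R^{n}$ be $\CM$-definable and view it as a family $\{X_t : t\in T\}$ with $X_t\sub R$ and $T\sub R^{n-1}$. By Proposition \ref{prop:bounded-definable} the set $T^{bd}$ of $t$ for which $X_t$ is bounded is $\CM$-definable, hence (by the induction hypothesis) $\CM_{bd}$-definable; similarly for the set where $R\setminus X_t$ is bounded — and by strong boundedness these two sets cover $T$. On the "bounded fibers" piece, I would uniformly translate each fiber into a fixed $\0$-interval using an $\CM$-definable choice of translation parameter (e.g. the infimum of the fiber, available once we have the restricted order on a $\0$-interval, via the sup/inf trick as in Proposition \ref{prop:bounded-definable}), reducing to a definable family of subsets of a fixed $\0$-bounded interval. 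There, one invokes the o-minimal cell-decomposition of the restricted structure and the analysis already developed: the fibers are finite unions of subintervals with $\CM$-definable endpoint functions $T^{bd}\to R$ landing in the $\0$-interval, and one must show those endpoint functions are themselves $\CM_{bd}$-definable (their graphs live in $R^{n}$ but are $\0$-bounded, so one would want an $(n-1)$-variable reduction for graphs of functions into a $\0$-interval). The co-bounded-fiber piece is handled by passing to complements.

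**The hard part will be** the gluing/uniformity in the bounded-fiber case: controlling how an $\CM$-definable family of bounded subsets of $R$ varies with the parameter $t$ when $t$ itself ranges over an \emph{unbounded} set $T\sub R^{n-1}$. One cannot simply apply the $n=1$ analysis fiberwise and hope for uniformity, because the endpoints of the fiber-intervals are given by $\CM$-definable functions of $t$ whose graphs are not a priori $\0$-bounded in $R^{n}$, so the induction hypothesis does not directly apply to them. The key lemma I expect is needed — and which presumably is proved in the material following this statement — is that an $\CM$-definable function $f\colon T\to I$ with $I$ a $\0$-interval is $\CM_{bd}$-definable: its graph is $\0$-bounded in the last coordinate, and one wants to reduce control of such a graph to control of its "slices" and of the family of level sets $f^{-1}(J)$ for $\0$-subintervals $J\sub I$, each of which is a definable subset of $T\sub R^{n-1}$ to which induction applies. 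Making this reduction precise — and in particular handling the interplay between the additive/linear structure (the maps in $\Lambda_\CM$, which can be unbounded) and the $\0$-bounded predicates — is, I believe, the technical heart of the argument, and likely requires first establishing auxiliary structure theory for $\CM_{bd}$ (e.g. a cell-decomposition or a good notion of dimension compatible with the $\0$-bounded predicates) before the induction can be pushed through.
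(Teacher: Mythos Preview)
Your overall framework (induction on $n$, split fibers by bounded/co-bounded) matches the paper, but there is a genuine gap in both your base case and your inductive step, and you are missing the key device the paper uses to close it.

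\textbf{Base case.} Your claim that a bounded $\CM$-definable $X\sub R$ ``lies in some $\0$-bounded set after shifting by a definable parameter'' is false in general: Theorem~\ref{thm uniform bound on the length} bounds the lengths of the interval \emph{components} of $X$ by some $K\in dcl_{\CM}(\0)$, but the gaps between components are not controlled, so the diameter of $X$ need not be bounded by any $\0$-constant (the paper remarks on exactly this after Proposition~\ref{Prop:match endpoints}). The paper instead treats each component $(a_i,b_i)$ separately: it shows $a_i,b_i\in dcl_{bd}(A)$ and $b_i-a_i\le K\in dcl_{bd}(\0)$, so $(a_i,b_i)=a_i+(0,b_i-a_i)$ with $(0,b_i-a_i)$ sitting inside the $\0$-interval $(0,K)$. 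The first of these facts is not a triviality: it is Theorem~\ref{acl=dcl} ($acl=dcl_{bd}$), whose proof in turn rests on the structure theorem for one-dimensional subsets of $R^2$ (Theorem~\ref{thm: subset of R^2}). You have not invoked either of these, and without them the $n=1$ case does not go through.

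\textbf{Inductive step.} For the same reason, your plan to ``uniformly translate each fiber into a fixed $\0$-interval using the infimum'' cannot work: the diameters of the fibers $X_t$ are not uniformly bounded, and moreover the infimum of $X_t$ is not available as an $\CM$-definable function of $t$ (you can define the finite set $\partial^-(X_t)$, but selecting its least element requires the full order). Your fallback idea of analyzing the endpoint functions via their level sets is in the right spirit but does not obviously terminate, since there is no single endpoint function to analyze.

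\textbf{What the paper does instead.} The paper passes to a saturated model and argues by compactness. Having established the $n=1$ case with careful parameter tracking (each $X_t$ is $\CM_{bd}$-definable over $At$, not merely over some parameters), compactness yields finitely many $\CL_{bd}$-formulas $\phi_1(t,x),\ldots,\phi_k(t,x)$ over $A$ such that every fiber is defined by one of them. The set $T_i=\{t:\phi_i(t,R)=X_t\}$ is then an $\CM$-definable subset of $R^{n-1}$, so $\CM_{bd}$-definable by induction, and $X=\bigcup_i\{\la t,x\ra:\psi_i(t)\wedge\phi_i(t,x)\}$. This compactness manoeuvre is precisely what dissolves the uniformity problem you identified as the hard part; the real work is pushed into Theorems~\ref{thm uniform bound on the length}, \ref{thm: subset of R^2} and~\ref{acl=dcl}, which feed the $n=1$ case.
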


One of our main difficulties in working with strongly bounded structures is the failure of global cell decomposition.
E.g. the set $R\setminus \{0\}$ cannot be decomposed definably into definable cells in a strongly bounded structure, because no ray is definable there.

Another difficulty is the fact that a-priori we do not know whether the model theoretic algebraic closure equals the definable closure in strongly bounded structures.
However, we shall eventually show, see Theorem \ref{acl=dcl}, that $acl=dcl$ in this setting.

We assume from now on throughout this section that $\CM$ is strongly bounded as above.
\subsection{Definable subsets of $R$ in strongly bounded structures}

%Let $\mathcal{M}$ be a strongly bounded reduct of an o-minimal expansion
%%of $\langle R;+,<\rangle$, such that \textbf{for every $a>0$,
%the restricted order $<$ on $[-a,a]$
%is definable in $\mathcal{M}$}. Let $<^*$ be the restriction to $[-1,1]$.

Notice that although the full order is not definable in $\CM$, a basis for the $<$-topology on $R$ and the product topology on $R^n$ is
 definable in $\CM$, using the restricted order. Thus we have:
\begin{lem}
\label{lem clusure} If $\{X_t:t\in T\}$ is an $\CM$-definable family of subsets of $R^n$, then the families
$$\{Cl(X_t):t\in T\}\,\,,\,\, \{Int(X_t):t\in T\}\,\,,\,\,\{Fr(X_t):t\in T\} $$ are definable in $\CM$.

%Also, if $f_t:M^n\to M$ is a definable family of (partial) functions then the set of point of continuity is also definable in $\CM$.
\end{lem}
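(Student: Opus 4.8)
The plan is to turn each of the three operations into a first-order formula over $\CM$, built from the defining formula of the family $\{X_t:t\in T\}$ together with an $\CM$-definable basis of small boxes around the origin. First I fix a positive $c\in R$ such that the restriction $<\rest I$ of the order to the interval $I:=(-c,c)$ is definable in $\CM$; such a $c$ exists by the standing assumptions --- e.g.\ take $c$ to be the largest element of $F\cup(-F)$ for a finite $\0$-definable set $F$ containing a nonzero element of $acl_\CM(\0)$, so that $I$ is even a $\0$-interval --- and since the lemma allows parameters we need not dwell on the distinction between $acl$ and $dcl$. For $\epsilon$ ranging over the $\CM$-definable set $(0,c)$ (definable from the predicate for $<\rest I$) put $B_\epsilon:=(-\epsilon,\epsilon)^n$. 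Since $(-\epsilon,\epsilon)\subseteq I$, the relation $\{(\epsilon,z):\,\epsilon\in(0,c),\ z\in B_\epsilon\}\subseteq R^{1+n}$ is likewise definable in $\CM$, and as $+$ is available, for every $x\in R^n$ the translate $x+B_\epsilon$ is an $\CM$-definable basic open box around $x$ in the product order topology; as $\epsilon$ varies over $(0,c)$ these translates form a neighbourhood basis at $x$.

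The formulas are then immediate. For closure I would use
\[
x\in Cl(X_t)\ \Longleftrightarrow\ \forall\epsilon\in(0,c)\ \exists y\,\big(y\in X_t\ \wedge\ y-x\in B_\epsilon\big),
\]
which is correct because $x$ lies in $Cl(X_t)$ iff every basic open neighbourhood of $x$ meets $X_t$, and it suffices to test the cofinal subfamily $\{x+B_\epsilon:\epsilon\in(0,c)\}$. Dually,
\[
x\in Int(X_t)\ \Longleftrightarrow\ \exists\epsilon\in(0,c)\ \forall y\,\big(y-x\in B_\epsilon\ \rightarrow\ y\in X_t\big).
\]
Each right-hand side defines an $\CM$-definable subset of $T\times R^n$ --- it uses only the formula defining $\{X_t:t\in T\}$, the predicate for $<\rest I$, and $+$ --- so the families $\{Cl(X_t):t\in T\}$ and $\{Int(X_t):t\in T\}$ are definable in $\CM$. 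For the frontier, observe that $\{R^n\setminus X_t:t\in T\}$ is also an $\CM$-definable family, hence so is $\{Cl(R^n\setminus X_t):t\in T\}$ by the previous step; thus $Fr(X_t)=Cl(X_t)\setminus X_t$ (or $Cl(X_t)\cap Cl(R^n\setminus X_t)$, according to the convention adopted) is obtained as a Boolean combination of $\CM$-definable families and is therefore definable in $\CM$.

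I do not expect a genuine obstacle in this argument. The only point requiring a moment's care is the reduction to the bounded family $\{B_\epsilon:\epsilon\in(0,c)\}$: one must check that restricting $\epsilon$ to the fixed interval $(0,c)$ --- rather than to all positive elements of $R$, which are not available to $\CM$ --- still yields a neighbourhood basis at every point, and this holds simply because any basic open box around a point contains a symmetric box of radius less than $c$. Everything else is a routine rewriting of the topological definitions inside the language available to $\CM$, using that $\CM$ carries a predicate for $<\rest I$, is closed under translation, and permits parameters.
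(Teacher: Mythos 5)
Your proposal is correct and is exactly the argument the paper intends: the paper dispenses with the proof by observing that a basis for the product topology on $R^n$ is $\CM$-definable via the restricted order, and your formulas for $Cl$, $Int$, and $Fr$ using the definable boxes $x+B_\epsilon$ with $\epsilon\in(0,c)$ are precisely the routine verification of that observation.
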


Every $\CM$-definable $X\sub R$ is a union of finitely many pairwise disjoint \textbf{maximal} open sub-intervals of $X$ (which are possibly not $\CM$-definable)
 and a finite set.  Below, when we say that {\em $I$ is an interval in $X$} we mean that $I$ is one of these open components of $X$.

\begin{defn}
Let $Y\subseteq R$ be an $\mathcal{M}$-definable set, we
define:
\vspace{.1cm}

$\partial^{-}(Y):=\{y\in R:\:y\:\textrm{is a left endpoint of an interval in }Y\}.$

\vspace{.1cm}

$\partial^{+}(Y):=\{y\in R:\:y\:\textrm{is a right endpoint of an interval in }Y\}.$
\end{defn}

\begin{lem}
\label{lem:endpoints}If
$\{Y_{t}:\:t\in T\}$ is an $\mathcal{M}$-definable family
of bounded subsets of $R$ then the families
$\{\partial^{-}(Y_{t}):\:t\in T\},\:\{\partial^{+}(Y_{t}):\:t\in T\}$
are $\mathscr{\mathcal{M}}$-definable, over the same parameter set.
\end{lem}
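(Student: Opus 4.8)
The plan is to express the property ``$y$ is a left (resp.\ right) endpoint of an interval in $Y_t$'' by an $\CL$-formula that refers to the ordering only on a single fixed $\0$-definable bounded interval; the global order on $R$ --- which is not definable in the strongly bounded $\CM$ --- is never invoked. The first reduction is to note that $\partial^{-}$ and $\partial^{+}$ depend only on the maximal open intervals of the set, and these are exactly the connected components of the interior; hence $\partial^{\pm}(Y_t)=\partial^{\pm}(Int(Y_t))$. By Lemma \ref{lem clusure} the family $\{W_t:=Int(Y_t):t\in T\}$ is $\CM$-definable over the same parameters, so we may replace $\{Y_t\}$ by $\{W_t\}$ and assume each $W_t$ is open. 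As an $\CM$-definable bounded open subset of $R$, each $W_t$ is then a finite disjoint union of bounded open intervals $(a^t_1,b^t_1),\dots,(a^t_{k_t},b^t_{k_t})$ with $b^t_i<a^t_{i+1}$, and $\partial^{-}(W_t)=\{a^t_i\}_i$, $\partial^{+}(W_t)=\{b^t_i\}_i$.

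Next, fix a nonzero $c\in acl_\CM(\0)$; applying the $\0$-definable endomorphism $x\mapsto -x$ if necessary, we may assume $c>0$, so that $J:=(0,c)$ is a nonempty $\0$-interval in $\CM$. By our standing assumption the relation $<\rest J$ is $\0$-definable. The key claim is that for every $t\in T$ and every $y\in R$,
\[
y\in\partial^{-}(W_t)\ \Longleftrightarrow\ \exists\,\delta\in J\ \forall\,s\in J\ \bigl(s<\delta\ \rightarrow\ (y-s\notin W_t\ \wedge\ y+s\in W_t)\bigr),
\]
and that $y\in\partial^{+}(W_t)$ is equivalent to the same formula with $y-s$ and $y+s$ interchanged. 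The right-hand side is an $\CL$-formula over the parameters defining $\{W_t\}$: it uses only $+$, the endomorphism $x\mapsto -x$, the predicate for $W_t$, and the $\0$-definable relation $<\rest J$. So the claim immediately yields the lemma.

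To verify the claim: if $y=a^t_i$, then for all sufficiently small $s>0$ we have $y+s\in(a^t_i,b^t_i)\sub W_t$, while $y-s$ falls strictly to the left of $a^t_i$ and hence, by the gap condition, outside $W_t$; picking a threshold $\delta\in J$ below this bound (possible since $0$ is a left limit of $J$) gives the right-hand side. Conversely, if the right-hand side holds with witness $\delta$, then since $<\rest J$ is the genuine restriction of $<$, the set $\{y+s:s\in J,\ s<\delta\}$ is exactly $(y,y+\delta)$, so $(y,y+\delta)\sub W_t$ and thus lies in a single component $(a^t_i,b^t_i)$ with $a^t_i\le y$; if $a^t_i<y$ then $y-s\in(a^t_i,b^t_i)\sub W_t$ for small $s\in(0,\delta)$, contradicting the right-hand side, so $a^t_i=y\in\partial^{-}(W_t)$. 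The statement for $\partial^{+}$ follows by the symmetric argument (reflecting through $0$).

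The only place the strongly bounded hypothesis really enters is the observation that being an endpoint is a local property, so it suffices to probe $W_t$ by translating by elements of one fixed $\0$-definable interval $J$; this is exactly what the standing assumptions --- a nonzero element of $acl_\CM(\0)$ together with $\0$-definability of $<$ restricted to $\0$-intervals --- are there to provide, and it is the main point one has to be careful about, since no ray and no copy of the full order is at our disposal.
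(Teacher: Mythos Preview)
Your proof is correct and follows essentially the same approach as the paper: both exploit that being a left/right endpoint is a purely local property, detectable using only the $\0$-definable restricted order on a fixed bounded $\0$-interval (your $J=(0,c)$, the paper's $(0,a_0)$). The only cosmetic difference is that you first pass to $Int(Y_t)$ via Lemma~\ref{lem clusure}, whereas the paper writes a single formula that branches on whether $x\in Y_t$; the underlying idea is identical.
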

\begin{proof} We fix an $\CM$-definable $<\rest\!\! (0,a_0)$ for some $a_0>0$.
We define$\:\partial^{-}(Y_{t})$ by the formula :
\[
(x\notin Y_{t}\wedge\exists\epsilon<a_{0}\:(x,x+\epsilon)\subseteq Y_{t})
\]
\[
\vee
\]
$$(x\in Y_{t}\wedge\exists\epsilon\leq a_{0}\:\wedge(x-\epsilon,x)\cap Y_{t}=\emptyset\wedge(x,x+\epsilon)\subseteq Y_{t})).$$

Because of the definability of $<^{*}$ in $\mathscr{\mathcal{M}}$,
$\{\partial^{-}(Y_{t}):t\in T\}$ is $\mathcal{M}$-definable. We
similarly handle $\partial^{+}(Y_{t})$.
\end{proof}
The next theorem is an important component of our analysis of strongly
bounded structures.
\begin{thm}
\label{thm uniform bound on the length} If $\{X_{t}:\:t\in T\}$
is an $\mathcal{M}$-definable family of bounded subsets
of $R$, then there is a uniform bound on the length of each
interval in $X_{t}$. Moreover, there exists such a bound in $dcl_\CM(\0)$.
\end{thm}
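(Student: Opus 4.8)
The plan is to avoid trying to single out one interval of $X_t$ (which seems hopeless since no ray, and more generally no linear order, is available in $\CM$), and instead to work with a quantity that is \emph{symmetric} in the intervals of $X_t$, namely the \emph{sum of the lengths} of all its intervals. This sum will turn out to be an $\CM$-definable function of $t$ taking only positive values, so strong boundedness forces its image to be bounded, and any bound on the sum is a fortiori a bound on each individual interval length.

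Concretely, I would first use uniform finiteness in the o-minimal structure $\CR_{omin}$ (of which $\CM$ is a reduct) to fix $k\in\mathbb N$ bounding the number of intervals of every $X_t$, and then partition $T$ into the $\CM$-definable pieces $T_j=\{t\in T: \lvert\partial^-(X_t)\rvert=j\}$ for $0\le j\le k$; here I use that $\partial^{\pm}(X_t)$ is $\CM$-definable by Lemma \ref{lem:endpoints} (using that each $X_t$ is bounded). Fixing $j\ge 1$ and working over $T_j$, both $\partial^-(X_t)$ and $\partial^+(X_t)$ have exactly $j$ elements, and since $j$ is now constant the maps
\[
\sigma^-(t)=\sum_{x\in\partial^-(X_t)}x,\qquad \sigma^+(t)=\sum_{x\in\partial^+(X_t)}x
\]
are $\CM$-definable on $T_j$ over the same parameters: e.g. $\sigma^+(t)=s$ holds iff there are distinct $x_1,\dots,x_j$ with $\partial^+(X_t)=\{x_1,\dots,x_j\}$ and $x_1+\dots+x_j=s$, which is a formula in $+$ and the predicate for $\partial^+(X_t)$. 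Writing the intervals of $X_t$ as $(a_1,b_1),\dots,(a_j,b_j)$, the function $\Sigma(t):=\sigma^+(t)-\sigma^-(t)=\sum_{i=1}^{j}(b_i-a_i)$ is then the sum of the lengths of the intervals of $X_t$, and it is $\CM$-definable on $T_j$.

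Now $\{\Sigma(t):t\in T_j\}$ is an $\CM$-definable subset of $R$ all of whose elements are positive. Since $\CM$ is strongly bounded this set is bounded or co-bounded, and it cannot be co-bounded because a co-bounded subset of $R$ contains an unbounded set of negative elements; hence it is bounded, say by some $M_j>0$. Taking $M:=\max_{1\le j\le k}M_j$, for every $t$ and every interval $(a_i,b_i)$ of $X_t$ (with $j$ the number of intervals of $X_t$) we get
\[
0<b_i-a_i\le\sum_{\ell=1}^{j}(b_\ell-a_\ell)=\Sigma(t)\le M_j\le M,
\]
so $M$ is a uniform bound on the length of every interval of every $X_t$. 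For the ``moreover'', each $M_j$ may be taken to be the supremum of the bounded $\CM$-definable set $\{\Sigma(t):t\in T_j\}$, which by o-minimality lies in $dcl_\CM(\0)$ when the family $\{X_t\}$ is $\0$-definable, so $M\in dcl_\CM(\0)$.

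The only real obstacle is the conceptual one already noted: turning ``bound each interval'' into ``bound the sum of the intervals''. This symmetrization is exactly what makes the relevant quantity $\CM$-definable in the absence of a definable linear order — in fact the argument never uses the restricted order except through its role inside Lemma \ref{lem:endpoints} — after which strong boundedness finishes the job with no further work.
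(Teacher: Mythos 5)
Your main argument is correct and takes a genuinely different, and in fact simpler, route than the paper. The paper proceeds by induction on the maximal number of intervals in $X_t$: the base case (a single interval) is handled by translating each $X_t$ so its left endpoint is $0$ and noting that the union of the translates is a definable interval with left endpoint $0$, hence bounded by strong boundedness; the inductive step needs a translation trick (Claim \ref{claim:For-each-,}) to definably split off a single interval of $X_t$. Your symmetrization --- bounding the \emph{sum} of the lengths via $\Sigma(t)=\sigma^+(t)-\sigma^-(t)$, which is definable from $\partial^{\pm}(X_t)$ and $+$ alone once the number of intervals is fixed on each definable piece $T_j$ --- avoids both the induction and the splitting trick: the sum of the lengths dominates each individual length, and a definable set of positive elements cannot be co-bounded, hence is bounded. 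You should still invoke Proposition \ref{prop:bounded-definable} to reduce to $\0$-definable families, as the paper does, so that the sets $S_j=\{\Sigma(t):t\in T_j\}$ are $\0$-definable; you only mention this parenthetically.

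The one genuine gap is in the ``moreover'' clause. You take $M_j=\sup S_j$ and assert it lies in $dcl_\CM(\0)$ ``by o-minimality''. O-minimality only places the supremum in the definable closure computed in the ambient o-minimal structure; the whole difficulty in this paper is that $\CM$ lacks the global order, and at this point it is not yet known that $acl_\CM=dcl_\CM$ (Theorem \ref{acl=dcl} is proved later, \emph{using} the present theorem). What you get directly is that $\sup S_j$ lies in the finite $\0$-definable set $F_j=Cl(S_j)\setminus Int(S_j)$, i.e.\ $\sup S_j\in acl_\CM(\0)$. This is fixable: since $S_j$ consists of positive elements, every element of $F_j$ is $\geq 0$, so $\sum_{x\in F_j}x$ is an upper bound for $S_j$, and this sum is $\0$-definable in $\CM$ by the same device you used for $\sigma^{\pm}$; hence a bound in $dcl_\CM(\0)$ exists. (Alternatively, the convex hull of $F_j$ is a $\0$-interval, so the restricted order on it is $\0$-definable and $\max F_j\in dcl_\CM(\0)$.) Either patch closes the gap, but as written that step is not justified.
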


\begin{proof} By Proposition \ref{prop:bounded-definable}, every $\CM$-definable family $\{X_t:t\in T\}$ of bounded subsets of $R$ is a sub-family of a $\0$-definable
family of such sets. Namely, if $\phi(x,t,a)$ is the formula defining the $X_t$'s over $a$, as $t$ varies, then we can consider the formula
$$\psi(x,t,y):\phi(x,t,y)\wedge \mbox{ $\psi(R,t,y)$ is a bounded set}.$$ Thus, it is sufficient to prove the result for $\0$-definable families.

By Lemma \ref{lem clusure}, we may
assume that each $X_{t}$ is an open set.
We will use induction on $n:=\textrm{the maximum number of intervals in }X_{t}$,
for $t\in T$.

For $n=1$, write  $X_{t}=(a_{t},b_{t})$.

%Assume towards contradiction that there is no bound on the length
%of the interval $(a_{t},b_{t})$, for $t\in T.$
 Consider the family
$\{X_{t}-a_{t}:t\in T\}$.
By Lemma \ref{lem:endpoints},
the family is $\0$-definable.

%$\{\partial^{-}(X_{t}):t\in T\}$ is $\mathcal{M}$-definable.

 Thus, the  set $Y=\underset{t\in T}{\bigcup}X_{t}-a_{t}$ is an $\mathcal{M}$-definable interval, over $\0$, whose left end-point is $0$.
 Because $\CM$ is strongly bounded, this interval must be bounded, hence its right endpoint is some $K\in M$.
 By Lemma \ref{lem:endpoints}, the point $K$ is definable over $\0$.

 %The interval $(0,K)$ is thus $\CM$-definable over $\0$,
 %so also $< \rest(0,K)$ is $\CM$-definable over $\0$.

Consider now the case $n=k+1$, namely each $Y_t$ consists of at most $k+1$ pairwise disjoint open intervals.
For each $t\in T$, let $D_{t}=\{c_{_{1}}-c_{2}:\:c_{1},c_{2}\in\partial^{-}(X_{t})\}$,
an $\mathcal{M}$-definable set by Lemma \ref{lem:endpoints}.\\

\begin{claim}
\label{claim:For-each-,}For each $t\in T$, there exists $d\in D_{t}$
such that $(X_{t}+d)\cap X_{t}$ is one of the intervals in $X_{t}$.
\end{claim}
\begin{proof}
Let $X_{t}=I_{1,t}\cup I_{2,t}\cup...\cup I_{k+1,t},\textrm{ where each }I_{m,t}:=(a_{m,t},b_{m,t})$,
such that:
\[
a_{1,t}<b_{1,t}<a_{2,t}<b_{2,t}<...<a_{k+1,t}<b_{k+1,t}
\]
For an interval $I=(a,b)$, let $|I|=b-a$.

Let $d=a_{k+1,t}-a_{1,t}$. In the set $X_{t}+d$, for each $m$,
the interval$\textrm{ }I_{m,t}$ is shifted to$\textrm{ }I_{m,t}+d$.
So $(X_{t}+d)\cap X_{t}$ consists of either $I_{k+1,t}$ (when $|I_{k+1,t}|<|I_{1,t}|$)
or $I_{1,t}+d$ (when $|I_{k+1,t}|>|I_{1,t}|$) .

If it consists of $I_{k+1}$ we are done. Otherwise we take
\[
d'=a_{1,t}-a_{k+1,t}\in D_{t}
\]
 and then $(X_{t}+d')\cap X_{t}=$ $I_{1,t}$.

So in both cases there exists $d\in D_{t}$ such that $X_{t}+d\cap X_{t}$
is one of the intervals in $X_{t}$.
\end{proof}

We define the set:
\[
D'_{t}:=\{d\in D_{t}:\:(X_{t}+d)\cap X_{t}\:\textrm{is one of the intervals in }X_{t}\}
\]

\begin{claim}
The family $\{D'_{t}\::t\in T\}$ is an $\mathscr{\mathcal{M}}$-definable
family of nonempty sets.
\end{claim}
\begin{proof}
For $t\in T$, $d\in D'_{t}$ if and only if the following two hold:

\begin{enumerate}
\item $\partial^{-}((X_{t}+d)\cap X_{t})\subseteq\partial^{-}(X_{t})\:\textrm{and }|\partial^{-}((X_{t}+d)\cap X_{t})|=1$, and

\item $\partial^{+}((X_{t}+d)\cap X_{t})\subseteq\partial^{+}(X_{t})\:\textrm{and }|\partial^{+}((X_{t}+d)\cap X_{t})|=1$.
\end{enumerate}

By Lemma \ref{lem:endpoints},
$(1),(2)$ are a definable properties in $\mathscr{\mathcal{M}}$.
By Claim \ref{claim:For-each-,}, each $D_{t}'$ is non-empty.
\end{proof}

We proceed with the proof of Theorem \ref{thm uniform bound on the length}.
Consider the $\mathcal{M}$-definable family
\[
\{\:Y_{t,d}:=X_{t}+d\cap X_{t}:\:d\in D'_{t},\:t\in T\}
,\] still defined in $\CM$ over $\0$.
 For every $t\textrm{ and }d\in D'_{t},$ the set $Y_{t,d}$ consists
of a single interval which is one of the intervals in $X_{t}.$ By
case $n=1$ we know that there is a uniform bound on length of each
$Y_{t,d}$, call it $w_{1}$ which can be chosen in $dcl_{\CM}(\0)$.
We now define, still over $\0$, the following family
\[
\{Z_{t,d}:=X_{t}\setminus\:Y_{t,d}:\:d\in D'_{t},\:t\in T\}
\]
 Each subset $Z_{t,d}$ consists of at most $k$ intervals among the
$k+1$ intervals of $X_{t}$. By the induction hypothesis, we know
that there is a uniform bound on the length of each interval, call
it $w_{2}$ which we may choose in $dcl_{\CM}(\0)$. %Therefore the
%union of the two interval $(0,w_1)\cup (0,w_2)$ is $\CM$-definable over $A$ and its right endpoint,

Thus the maximum of $w_1,w_2$, which is in $dcl_\CM(\0)$,  is the bound on the
length of each interval of $X_t$, as $t$ varies. This
ends the proof of Theorem \ref{thm uniform bound on the length}.\end{proof}

As a corollary we can now match, definably in $\mathcal{M}$,
each left endpoint of an interval in $X_{t}$ with the corresponding
right endpoint:
\begin{prop}
\label{Prop:match endpoints} Let
$\{X_{t}:\:t\in T\}$ be an $\mathcal{M}$-definable family
of bounded subsets of $R$, and let
\[
L_{t}=\{\langle a,b\rangle \in\partial^{-}(X_{t})\times\partial^{+}(X_{t})\textrm{: the interval }(a,b)\textrm{ is one the intervals of }X_{t}\}
\]
 Then the family $\{L_{t}:t\in T\}$ is $\mathcal{M}$-definable.
\end{prop}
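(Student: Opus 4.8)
The plan is to match, uniformly in $t$, each left endpoint $a\in\partial^-(X_t)$ with the right endpoint $b\in\partial^+(X_t)$ of the interval $(a,b)$ of $X_t$ that starts at $a$. The key point is that we now have, by Theorem \ref{thm uniform bound on the length}, a uniform bound $w\in dcl_\CM(\0)$ on the length of every interval occurring in any $X_t$. Since the order $<\rest\!\! (0,a_0)$ is $\CM$-definable for some $a_0>0$, we may also assume $w<a_0$ by shrinking $X_t$ (or rescaling via a $\lambda\in\Lambda_\CM$) — or more simply, since the matched pairs $(a,b)$ always satisfy $0<b-a<a_0$, the restricted order is all we need to compare $a$ and $b$ and to quantify over points between them.

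First I would write down the defining formula directly. Given $\langle a,b\rangle\in\partial^-(X_t)\times\partial^+(X_t)$, the pair lies in $L_t$ if and only if $(a,b)$ is one of the intervals of $X_t$, which (knowing $b-a<w<a_0$, so $<$ restricted to the relevant range is available) is expressed by: $a<b$, and $(a,b)\subseteq X_t$, and $a\notin X_t$, and $b\notin X_t$. The clause $(a,b)\subseteq X_t$ is a bounded quantification $\forall x\,(a<x<b\rightarrow x\in X_t)$, legitimate in $\CM$ because the order is definable on intervals of length $<a_0$; the remaining clauses are quantifier-free in the language together with the predicate for $X_t$. Since $\{X_t:t\in T\}$, $\{\partial^-(X_t):t\in T\}$ and $\{\partial^+(X_t):t\in T\}$ are all $\CM$-definable (the latter two by Lemma \ref{lem:endpoints}, after first reducing via Proposition \ref{prop:bounded-definable} to the case of a $\0$-definable bounded family so that Lemma \ref{lem:endpoints} applies), this exhibits $\{L_t:t\in T\}$ as $\CM$-definable over the same parameters.

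The one genuine subtlety — and the step I expect to require the most care — is the use of the restricted order: a priori $<$ is only definable on a fixed $\0$-interval $(0,a_0)$, whereas the endpoints $a,b$ themselves range over all of $R$ and need not be small. The resolution is that we only ever need to compare $a$ with $b$ and to range over $x$ with $a<x<b$, and by Theorem \ref{thm uniform bound on the length} we have $b-a<w$. So the correct formula uses translation: $\langle a,b\rangle\in L_t$ iff $a\ne b$, $b-a\in(0,a_0)$, $a\notin X_t$, $b\notin X_t$, and $\forall\varepsilon\,\bigl(\varepsilon\in(0,\,b-a)\rightarrow a+\varepsilon\in X_t\bigr)$ — here $b-a$ and $a+\varepsilon$ are formed using $+$, the comparisons $\varepsilon\in(0,a_0)$ and $\varepsilon<b-a$ use the definable restricted order, and no unbounded comparison of $a$ or $b$ is needed. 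One must also observe that this formula cannot pick up a spurious pair: if $(a,b)\subseteq X_t$ with $a,b\notin X_t$ and $b-a<a_0$, then $(a,b)$ is exactly a maximal open subinterval of $X_t$, i.e. one of the intervals of $X_t$, because any strictly larger open interval of $X_t$ containing $(a,b)$ would have to contain $a$ or $b$. Applying Lemma \ref{lem:endpoints} and Proposition \ref{prop:bounded-definable} for the definability of the two endpoint families, and combining, yields that $\{L_t:t\in T\}$ is $\CM$-definable over the parameters defining $\{X_t:t\in T\}$.
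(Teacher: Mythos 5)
Your approach is essentially the paper's: get the uniform bound $K\in dcl_\CM(\0)$ from Theorem \ref{thm uniform bound on the length}, use Lemma \ref{lem:endpoints} for the endpoint families, and then write an explicit formula that only ever compares quantities lying in a bounded $\0$-interval. Two points, however, need repair. First, the clauses $a\notin X_t$ and $b\notin X_t$ do not belong in the formula: an endpoint of a maximal open component of $X_t$ may itself be an element of $X_t$ (e.g.\ $X_t=\{0\}\cup(0,1)$, whose unique interval is $(0,1)$ with left endpoint $0\in X_t$ --- this is exactly why the formula in Lemma \ref{lem:endpoints} has two disjuncts). With those clauses your formula wrongly rejects such pairs. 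They should simply be dropped; the ``no spurious pair'' argument still works, since if $a\in\partial^-(X_t)$, $b\in\partial^+(X_t)$ and $(a,b)\subseteq X_t$ then $(a,b)$ lies inside a single maximal component whose endpoints must then be $a$ and $b$ (otherwise $a$, resp.\ $b$, would sit in the interior of that component and could not be a left, resp.\ right, endpoint of any component). Second, your handling of the length bound is shakier than necessary: the bound $K$ need not be smaller than $a_0$, the matched pairs need not satisfy $b-a<a_0$, and ``shrinking'' or ``rescaling'' the family is not justified. The correct fix is the standing assumption that $<$ is $\0$-definable on \emph{every} $\0$-interval; since $K\in dcl_\CM(\0)$, the order on $[0,K]$ is available, and the bounded quantification $\forall\varepsilon\,(0<\varepsilon<b-a\rightarrow a+\varepsilon\in X_t)$ together with the clause $b-a\leq K$ is legitimate. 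This is precisely how the paper proceeds (it matches $a$ with $\min(\partial^+(X_t)\cap[a,a+K])$, which avoids the membership issue altogether). With these corrections your proof is the paper's proof.
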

\begin{proof} By Theorem \ref{thm uniform bound on the length}, there is a bound, call it $K\in dcl_\CM(\0)$, for the length of each interval in $X_{t}$, for
all $t\in T$.
 For each $t\in T\:$, we have
\[
(1) \langle a,b \rangle \in L_{t}\:\:\iff\:\:a\in\partial^{-}(X_{t})\textrm{ and \ensuremath{b=min(\partial^{+}(X_{t})\cap[a,a+K])} }
\]
By Lemma \ref{lem:endpoints},
$\partial^{-}(X_{t}),\:\partial^{+}(X_{t})$ are definable families
and since in $(1)\:$ we only use the order on $[0,K]$, the family
$\{L_{t}:t\in T\}$ is definable in $\mathscr{\mathcal{M}}$.
\qed

\begin{rem}\begin{enumerate}
%\item it follows from Theorem \ref{thm uniform bound on the length} is that if $\CN\succ \CM$ and %$X\sub N$ is a bounded $\CN$-definable
%set then there exists $K\in M$ such that ever component of $X$ is bounded in length by $K$.

\item Notice that Theorem \ref{thm uniform bound on the length} fails without the assumption that the $X_t$'s are bounded sets. Namely, it is
not true in general that the length of the bounded components of $X_t$ is bounded in $t$.
 For example, the sets $X_t=R\setminus \{-t,t\}$ has $(-t,t)$ as an open component, with unbounded length as $t\to \infty$.

 Also, even if each $X_t$'s is bounded it is not true that the diameter of the $X_t$'s is uniformly bounded.
For example, take the family $\{(-t,t-1)\cup (t,t+1):t\in R\}$ that is definable using $<\upharpoonright (0,1)$.

 \item We do not know whether Proposition \ref{Prop:match endpoints}
 holds if we drop the assumption that the $X_t$'s are bounded. Namely, can we still match definably the left and right endpoints of the bounded components of $X_t$,
  when the $X_t$'s are unbounded?

  \end{enumerate}
  \end{rem}

\subsection{Affine sets and functions}
Recall that  $\CR_{omin}$ is an o-minimal expansion of an ordered divisible abelian group $R$, and
we assume that $\CM=\la R;+,\cdots\ra $ is a strongly bounded reduct of $\CR_{omin}$ in which $<$ is $\0$-definable on every $\0$-interval.
We let $<^*$ denote the ordering on some fixed interval we call $(0,1)$.

%\subsubsection{(Locally) affine maps and sets}

\begin{defn}\label{affine} Let $\la R;<,+\ra$ be an abelian ordered divisible group. \begin{enumerate} \item A map $f:R^n\to R^k$ is {\em affine} if it
 is of the form $\ell(x)+d$ for $\ell:R^n\to R^k$ a homomorphism between $\la R^n,+\ra$ and $\la R^k,+\ra$,  and $d\in R^k$.
  \item A (partial) function $f: R\rightarrow R$ , is \emph{eventually
affine} if there exists $a>0$ such that $(a,\infty)\sub dom(f)$ and  the restriction of $f$ to
$(a,+\infty)$ is affine.
%\item A (partial) function $f: R^n\rightarrow R$ , is called {\em locally affine at $a$} if there exists an open neighborhood
%of $a$ at which $f$ is affine.
% \item A partial map $f:R^n\to R^k$ is {\em locally affine at $a$} if $a\in Int(dom f)$ and there exists an open $U\sub R^k$, $a\in U\sub dom (f)$ such that for all $x,y, z\in U$, $x-y+z\in U$ and $f(x)-f(y)+f(z)=f(x-y+z)$ (every affine map as above is thus locally affine).

\item $X\sub R^n$ is {\em locally affine at $a\in X$} there is an open neighborhood $U\ni a$ such that for all $x,y,z\in U\cap X$, $x-y+z\in X$. The {\em affine part of $X$} is the set;
\[
\mathscr{A}(X)=\{x\in X:\:X\textrm{ is locally affine at }x\}.
\]
\end{enumerate}
\end{defn}

Notice that if $X$ is the graph of an affine map then $\mathscr{A}(X)=X$.
Also, because a basis for the $R^n$-topology is definable in $\CM$, we immediately have:
\begin{lem} \label{theorem:the affine part of a family }  Let
 $\{X_t:t\in T\}$ be an $\CM$-definable family of subsets of $R^n$, defined over $\0$.
Then the
family $\{\mathscr{A}(X_{t}):\:t\in T\}$ is $\mathcal{M}$-definable, over $\0$.
\end{lem}
%\proof Recall that a basis to the o-minimal topology on $R^n$ is definable in $\CM$, thus, for every projection $\pi:R^n\to R^k$
%onto some $k$ of the coordinates, the set $X_{t,\pi}$ of all $x\in X_t$ such that $X_t$ is, locally at $x$, the graph of a function from $R^k$ to $R^{n-k}$
%(with respect to $\pi$)
%is uniformly $\CM$-definable. So, after uniformly partitioning each $X_t$ into finitely many sets (and removing the points where $X_t$ is not the graph of a function), we may assume that each $x\in X_t$ is locally the graph of such a function, with respect to a fixed $\pi$.

%Now, $\mathscr{A}(X_t)$ consists of all $x\in X_t$ such that $X_t-x$ is, locally at $0$, the graph of an additive function from $R^k$ to $R^{n-k}$, namely
%a function which satisfies, locally at $0$,  $\ell(x+y)=\ell(x)+\ell(y)$. This is clearly definable in $\CM$.\end{proof}

We now prove:

\begin{prop} \label{prop: finite number of slopes} Every $\CM$-definable endomorphism $f:R\to R$ is $\0$-definable.\end{prop}
%\begin{prop}
%}Let $\{f_{t}:=c_{t}y:t\in T\}$
%be an $\mathscr{\mathcal{M}}$-definable family of linear functions
%from $\mathbb{R}$ to $\mathbb{R}$ and $T$ is an $\mathscr{\mathcal{M}}$-definable.
%Then the set $\{c_{t}:t\in T\}$ is finite.
%\end{prop}
\begin{proof} Assume that  $f$ is defined by   $\CM$-formula $\phi(x,y,a)$, over the parameter $a$.
We will show that $f$ can be defined without parameters.

Since being an $R$-endomorphism is $\CM$-definable, we may assume that there is some $\CM$-definable $T\sub R^k$, such that
for all $t\in T$. If $\phi(R^2,t)$ is non-empty then it defines
a  non-zero endomorphism $f_t$ of $\la R;+\ra$.

 Assume first that the set of endomorphisms $f_t$'s defined by $\phi$ is finite. Define $t_1 E t_2$ iff $f_{t_1}=f_{t_2}$, an $\CM$-definable equivalence relation.
Consider the functions near $0$, and define $[t_1]_E<[t_2]_E$ if for all $x>0$ sufficiently small, we have $f_{t_1}(x)<f_{t_2}(x)$. By o-minimality, we obtain
a linear ordering of the finitely many $E$-classes, and since $<$ is $\CM$-definable in a neighborhood of $0$, this ordering is $\CM$-definable.
Thus, each $f_{t}$ in this finite family of endomorphisms is $\0$-definable.

Assume now that the  family $\{f_t:t\in T\}$ is infinite, and we shall reach a contradiction.
Consider the set $\{f_t(1):t\in T\}$. By o-minimality it contains an open interval $(a,b)$, and by replacing each $f_t$ with $f_t-f_{t_0}$,
for some $t_0\in T$ for which $f_{t_0}\in (a,b)$, we may assume that the interval $(a,b)$ contains $0$ and the ordering on $(a,b)$ is $\CM$-definable (we think of $f_t(a)$ as ``the slope'' of $f_t$). Let $T_0=\{t\in T:f_t(1)\in (0,b)\}$.

We write $t_1 \sim t_2$ if $f_{t_1}=f_{t_2}$, and let $[t]$ be the equivalence class of $t$. In abuse of notation we let $f_{[t]}$ denote the corresponding endomorphism of $R$.

  By o-minimality, if $f_{t_1}(1)=f_{t_1}(1)$ then $f_{t_1}=f_{t_2}$, thus we obtain an $\CM$-definable function $t:(0,b)\to T_0/\sim$, defined by $f_{[t(x)]}(1)=x$. Namely, $f_{[t(x)]}$ is the endomorphism whose ``slope'' is $x$.
   Fix an element $d>0$, and define
 $\sigma:(0,b)\to R$ by: $\sigma(x)=f_{[t(x)]}^{-1}(d)$. Namely, $\sigma(x)=y$ if there exists $t\in T_0$ such that $f_t(1)=x$ and $f_{t}(y)=d$ (we may think of $\sigma (x)$ as ``$d/x$'').
 The function $\sigma$ is also $\CM$-definable. For every $t\in T_0$, we have $f_t(1)>0$, hence
  $f_t(x)>0$ if and only if $x>0$. Threfore, $\sigma$ is positive on $(0,b)$.

\vspace{.1cm}
\noindent {\em Claim} $Im(\sigma)$ is unbounded in $R$.

\vspace{.1cm}

 Indeed, assume towards contradiction that $K=sup(Im(\sigma))<\infty $. By our observation, $K>0$.
 Choose $y_0\in Im(\sigma)$, $y_0<K$ and  sufficient close to $K$,  such that $K<2y_0$. By assumption, there exists $t_0\in T_0$ and $x_0>0$,  such that $f_{t_0}(1)=x_0$ and
 $f_{t_0}(y_0)=d$.

Let $t_1\in T_0$ be such that $[t_1]=t(x_0/2)$. Then $f_{t_1}(1)=x_0/2=f_{t_0}(1)/2$. It follows that $f_{t_1}=f_{t_0}/2$ and hence $$f_{t_1}(2y_0)=f_{t_0}(2y_0)/2=f_{t_0}(y_0)=d$$ But then $f_{t_1}(1)=x_0/2$ and $f_{t_1}(2y_0)=d$, so by definition,  $\sigma(x_0/2)=2y_0>K$,
contradicting the assumption that $K$ bounds $Im(\sigma)$.

Thus, $Im(\sigma)$ is an $\CM$-definable set which is unbounded and positive, contradicting the assumption that $\CM$ is strongly bounded.\end{proof}

 \begin{defn} We denote by $\Lambda_{omin}$ the set of all $\CR_{omin}$-definable endomorphism $f:\la R,+\ra\to \la R,+\ra$.
 and we still let $\Lambda_\CM$ denote the set of all $\CM$-definable endomorphisms of $R$, which by Proposition \ref{prop: finite number of slopes},
 is necessarily $\0$-definable. Let $\Lambda_{omin}^*$ and $\Lambda_\CM^*$ denote those non-zero endomorphisms.

 \end{defn}

\subsection{Definable functions of $1$-variable}

Our goal is to describe definable functions in 1-variable, and prove that $\CM$ has no  definable ``poles''.
\begin{prop}
\label{prop eventually constant} If $g:R\rightarrow R$
is an $\mathcal{M}$-definable partial function whose domain
is co-bounded and $Im(g)$ is bounded. Then $g$ is constant on a co-bounded set.
\end{prop}
\begin{proof} By o-minimality, there exists $L\in R$ such that
$\underset{x\rightarrow+\infty}{lim}g(x)=L$. We shall see that $g\equiv L$
on a co-bounded set.

The function $g$ is definable in an o-minimal structure, thus there exists $a_{1}\in R$
such that $g\rest (a_{1},+\infty)$ is either constant or strictly
monotone, and there exists $a_{2}$ such
that $g$ is constant or strictly monotone on $(-\infty,a_{2})$.

If $g$ is constant $L$ on $(a_{1},+\infty)$ then $\{x\in R:g(x)=L\}$
is unbounded and since $\mathcal{M}$ strongly bounded the set must be co-bounded
and we are done. Assume towards contradiction that $g\rest(a_1,\infty)$ is strictly monotone.

Assume first that $g$ is strictly increasing on $(a_1,\infty)$. Notice that the property of
being locally increasing in a neighborhood of $x\in R$ is definable using $<^*$, thus the set
$$\{x\in R: g \mbox{ is locally increasing at $x$}\}$$ is $\CM$-definable, contains $(a_1,\infty)$ and hence must be co-bounded.
It follows that $g$ is strictly increasing on $(-\infty, a_2)$.

Because $\lim_{x\to \infty} g(x)=L$ and $g$ is increasing,  there exists $b\in R$ such that
for all $x>b$, $L-1<g(x)<L$. Because $<^*$ is $\CM$-definable the set of all $x\in R$ such that $L-1<g(x)<L$
is $\CM$-definable so must be co-bounded. In particular, we may assume that  $L-1<g(x)<L$ for all $x<a_2$
and thus $g(x)$ has a limit $L_1\in R$ as $x\to -\infty$.

But since $g$ is increasing on $x<a_{2}$, it follows that $L_1<L$ and in addition
there exists $a_2'\leq a_2$ and $\epsilon>0$, such that for all $x<a_2'$, $$L_1<g(x)<L_1+\epsilon<L.$$

Using $<^*$ again, this is an $\CM$-definable property of $x$
so must hold also for all $x>a_1'$, contradicting the fact that $\lim_{x\to +\infty} g(x)=L$.

A similar argument works when $g$ is eventually decreasing.\end{proof}

\begin{rem} By \cite{Edmundo}, if $\CN=\la R;<,+,\cdots\ra $ is an o-minimal expansion of an ordered group in which every definable bounded function is eventually constant then $\CN$ is {\em semibounded},  namely every definable set is definable using the underlying vector space, together with all the definable bounded sets. This might suggest  a fast deduction of Theorem \ref{thm: main results} from Proposition \ref{prop eventually constant}. The problem of this approach is that we do not know  that the definable functions in the strongly bounded $\CM=\la R;+,<^*,\cdots \ra$ are the same as in its expansion by the full $<$. Thus, we do not see how to apply Edmundo's theorem here.
\end{rem}

%Notice that the above proof works under the weaker assumption that $\CR_{omin}$ is an o-minimal expansion of an ordered group.

Next, using almost identical arguments to Edmundo's \cite{Edmundo}  we shall show
that every $\CM$-definable function $f:R\to R$ is affine on a co-bounded set. For that, we recall some notation and facts, based on work
of Miller and Starchenko \cite{MS}.

\vspace{.2cm}

\noindent{\bf Notation}
 For $\CR_{omin}$-definable positive (partial) functions
$f,g:R\to R$, such that $(a,\infty)\sub \mbox{dom}(f),\mbox{dom}(g)$, we write $f\leq g$ (or $f<g$) if $f(x)\leq g(x)$ (or, $f(x)<g(x)$) for
all large enough $x$.

We write $v(f)<v(g)$) if $|f|>|\lambda\circ g|$ for
all $\lambda\in \Lambda_{omin}^*$ such that $\lambda>0$. We also write $v(f)=v(g)$ if there are $\lambda_1,\lambda_2\in \Lambda_{omin}^*$, both positive, such that
$$|\lambda_1\circ g|\leq |f|\leq |\lambda_2\circ g|.$$ This easily seen to be an equivalence relation.

Finally, we write $\Delta(f)=f(x+1)-f(x)$.

\begin{fact}\cite{Edmundo}\label{Edmundo} For every $\CR_{omin}$-definable function on unbounded ray.
\begin{enumerate}
\item If $v(f)>v(x)$ then $\lim_{x\to \infty}\Delta(f)=0$.

\item If $v(f)<v(x)$ then $v(f^{-1})>v(x)$.

\item If $v(f)=v(x)$ then $\Delta(f)(x)$ has a limit in $R$ as $x\to \infty$.
%for every $y\in R$, the limit $\lambda(y)=\lim_{x\to\infty} \Delta_y(f)(x)$  exists in $R$
%and the function $\lambda(y)$ is an $\CR_{omin}$-definable endomorphism of $R$. Furthermore, $v(f-\lambda)>v(x)$.
\end{enumerate}
\end{fact}

\vspace{.2cm}

The following is just a warm-up towards Theorem \ref{thm: subset of R^2}.
The proof follows closely the proof of  \cite[Poposition 2.8]{Edmundo}, which uses results of Miller and Starchenko \cite{MS}:

\begin{lem}
\label{lemma eventually affine} If $f: R\rightarrow R$
is $\mathcal{M}$-definable on a co-bounded set, then $f$ is eventually affine. Moreover, there exists a $\0$-definable endomorphism
$\lambda\in \Lambda_\CM$ and $A>0$ such that for all $x$ with $|x|>A$, we have $f(x)=\lambda(x)+d$, for some $d\in R$.
\end{lem}
\begin{proof} Assume towards contradiction that $f:R\to R$ is not eventually affine.
Without loss of generality, $f$ is eventually increasing, and by Proposition \ref{prop eventually constant},
it must approach $+\infty$. If $v(f)>v(x)$ then by Fact \ref{Edmundo}, $\lim_{x\to \infty}\Delta(f)=0$. Since $\Delta(f):=f(x+1)-f(x)$ is definable
in $\CM$, it follows from Proposition \ref{prop eventually constant} that it must be eventually $0$ and therefore  $f$ is eventually affine.

If $v(f)<v(x)$ then by \ref{Edmundo}, $v(f^{-1})>v(x)$, where $f^{-1}$ is taken to be the eventual compositional inverse of $f$, which is also definable in $\CM$.
Thus, as above, $f^{-1}$ is eventually affine so also $f$ is.

We are left with the case $v(f)=v(x)$. By Fact \ref{Edmundo} (3), the $\CM$-definable function $\Delta(f)$ approaches a limit $c$ in $R$. By Proposition
\ref{prop eventually constant}, we have $\Delta(f)$ eventually constant, and thus, by o-minimality, $f$ is eventually affine.

Thus, we showed so far that there exists a definable endomorphism $\lambda\in \Lambda_\CM$  such that $f(x)=\lambda(x)+d$ for all $x>0$ large enough.
By Proposition \ref{prop: finite number of slopes}, $\lambda$ is $\0$-definable. The set $$\{x\in R:f(x) =\lambda(x)+d\}$$ is $\CM$-definable
and contains an bounded ray so must be co-bounded.\end{proof}

Before the next proposition, we introduce a new notion.
\begin{defn}
Given $X\subseteq R^{n}$, let
\[
Stab_{bd}(X):=\{a\in R^{n}:\:(a+X)\triangle X\:is\:bounded\},
\]

where $A\triangle B=A\cup B\setminus A\cap B$.
\end{defn}

{\em For a function $f$, we let $\Gamma(f)$ denote its graph}.

By Proposition \ref{prop:bounded-definable}, if $X$ is definable in $\CM$ over $A$ then so is $Stab_{bd}(X)$.
The following are easy to verify:
\begin{fact} \label{fact:(1)ON STAB}
\begin{enumerate}
\item  For every $X\subseteq R^{n}$,
$Stab_{bd}(X)$ is a subgroup of $\la R^{n},+\ra $.

\item If $X\subseteq R ^{2}$ is the graph of an affine function
$f(x)=\lambda(x)+b$, on a co-bounded subset of $R$, then
\[
Stab_{bd}(X)=\Gamma(\lambda).
\]

\item If a definable set $X\subseteq R^{2}$ is a finite union
of graphs of affine functions, all of the form $\lambda+d$ for a fixed $\lambda$, and at least one of the
 functions is defined on an unbounded set then $Stab_{bd}(X)=\Gamma(\lambda)$.\end{enumerate}
\end{fact}

The following statement would have been immediately true if definable sets in $\CM$ admitted definable cell decomposition (with respect to the ambient ordering).
\begin{prop}\label{0-definable} Assume that $X\sub R^2$ is $\CM$-definable over $A$, and $\dim(X)\leq 1$. Assume that there exists an $\CR_{omin}$-definable
endomorphism $\lambda:R\to R$,
and some $a,d\in R$ such that graph of $\lambda(x)+d\rest(a,\infty)$ is contained in $X$. Then $\lambda$ is $\CM$-definable (necessarily over $\0$).\end{prop}
\begin{proof}
 Recall that $\mathscr{A}(X)$, the affine part of $X$ is $\CM$-definable over $A$. For large enough $a$, it contains $\Gamma(\lambda+d\rest \, (a,\infty))$. So, without
loss of generality, $X=\mathscr{A}(X)$.
%By Theorem \ref{theorem:the affine part of a family }, the family\noun{
%$\{\mathscr{A}(X_{t}):\:t\in T\}$ }is $\mathcal{M}$-definable.

%Given $t\in T$,
We define for each $x,y\in X$,
% $\bar{x}=\langle x_{1},x_{2}\rangle,\bar{y}=\langle y_{1},y_{2}\rangle\in X$,
the relation $x\sim y$ iff there exist open sets $U,V\ni 0$ in $R^2$, such that
$$(y-x)+(x+U\cap X)=y+V\cap X.$$
Said differently, up to translation, $X$ has the same germ at $x$ and at $y$.
Because a basis for the $R^2$ topology is definable in $\CM$, the relation $\sim$ is definable in $\CM$.

Notice that for $x$ large enough, all elements on $\Gamma(\lambda+d)\cap X$ are in the same $\sim$-class, so we may replace $X$ by this $\sim$-class, which is $\CM$-definable.

%We  replace $X$ by $$X'=\{x\in X:\mbox{ the $\sim$-class of $x$ is unbounded\}.$$
%By Proposition \ref{prop:bounded-definable}, the set $X'$ is definable and contains %$\Gamma{\lambda+d)$ on an unbounded ray. We may now

% Since $X$ is locally affine at every point, we have $x\sim y$ if and only if, up to permutation %of the coordinates,
%there is a fixed $R$-endomorphism $\lambda:R\to R$, and $d_1,d_2\in R$, such that the germ of %$X$ at $x$ is that of  $\Gamma(\lambda+d_1)$ and at
% $y$ is that of $\Gamma(\lambda+d_2)$.

%Because a basis for the $R^2$ topology is definable in $\CM$, the relation $\sim$ is definable in $\CM$.
%If $x\in X$ is non-isolated then the dimension of its $\sim$-class is $1$. Thus, there are finitely many
%$\sim$-classes in $X$, one of which containing the graph of $\lambda+d\rest (a,\infty)$. Because each class is $\CM$-definable,

Thus, we may assume that
all elements of $X$ are $\sim$-equivalent, and $X$ contains $\Gamma(\lambda+d\rest(a,\infty))$. It follows that $X$ is contained in finitely many translates of the graph of $\lambda$. Applying Fact \ref{fact:(1)ON STAB}(3), we conclude that $Stab_{bd}(X)$ is exactly the graph of $\lambda$,
thus the function $\lambda(x)$ is $\CM$-definable. By Lemma \ref{prop: finite number of slopes}, $\lambda$ is $\0$-definable.\end{proof}

\subsection{Definable subsets of $R^2$}
The next result is the main structure theorem of the paper.

\begin{thm}\label{thm: subset of R^2}Under our standing assumptions on $\CM$.

Assume
that $X\subseteq R^{2}$ is definable in $\mathcal{M}$ over a parameters set $A\sub R$, with $\dim(X)\leq 1$.

Then, there are $\lambda_1,\ldots,\lambda_r\in \Lambda_\CM$, and there are $\CM$-definable finite set $D_i\sub R$, $i=1,\ldots,r$, and $D\sub R $ all defined over $A$,
such that

(i) for every $i=1,\ldots, r$, and $d\in D_i$,
$\Gamma(\lambda_i+d)\setminus X$ is bounded (i.e. $X$ contains the restriction of $\lambda_i+d$ to a co-bounded set).

(ii) For every $d\in D$, $(\{d\}\times R)\setminus  X$ is bounded.

(ii) The set $$X\setminus (\bigcup_{i=1}^r\bigcup_{d\in D_i} \Gamma(\lambda_i+d)\, \cup\, \bigcup_{d\in D} \{d\}\times R)$$ is bounded in $R^2$.

\end{thm}

\begin{proof}  If $X$ is bounded then there is nothing to prove so we assume $\dim(X)=1$ and $X$ is unbounded.
By the cell decomposition theorem in o-minimal structure, $X$ can be decomposed into
a finite union of cells of dimension 0 and 1. However, these
cells are not in general definable in $\mathcal{M}$.

%\todo{Maybe it is better to omit the part below}

% Consider the set $$X_{const}=\{x\in X: X \mbox{ locally at $x$ the graph of a constant function}\}.$$
%It is $\CM$-definable over $A$, it projection $D$ on the $y$-axis is a finite $A$-definable set.
%By \ref{}\todo{ }, the set $D_1$ of  $d\in D$ such that $X\cap \Gamma(f_{0,d})$ is unbounded also $A$-definable in $\CM$.

%Thus,
%$X_{const}\setminus \bigcup_{d\in D_1}\Gamma(f_{0,d})$ is bounded.

%Hence, we may assume from now on that $X$ does not contain any graph of a locally constant function.

%Now, for those cells $C\in \mathcal C$, that are not bounded  we show that each
%is the graph of an eventually affine function, outside of a bounded set.

Assume first that $X$ contains the graph of a function $f:(a,+\infty)\to R$,
and let $\Psi(x,y)$ be the $\CM$-formula that defines $X$.

\vspace{.2cm}
\noindent{\bf Case (i)}  $f$ is bounded at $\infty$.
\vspace{.2cm}

In this case we prove a general statement:

\begin{claim}\label{unbounded}  If $\dim X\leq 1$ and $X$ contains the graph of a bounded function $f:(a,\infty)\to R$ then $f$ is eventually constant.\end{claim}
\begin{proof} By o-minimality, $\lim_{x\to +\infty}f(x)=L$ for
some $L\in R$.

By our standing assumption, $<\rest (0,a_0)$ is $\CM$-definable, for some $a_0>0$, and thus $<$ is definable on every interval of length $\leq a_0$.
 Let $X_{L}:=R\times[L-a_{0},L+a_{0}])\cap X$.
By o-minimality,  there exists $m\in \mathbb N$, such that for all large enough $a\in R$, we have $|X_a|\leq m$.
The set $Z=\{a\in R: |X_a|\leq m\}$ is definable in $\CM$ and unbounded, thus we may replace $X_L$ by $X_L\cap Z\times R$, containing the graph of $f$.  We  call it $X_L$ again.

Using the restricted order, we can partition $X_{L}$, definably in  $\CM$, into
finitely many graphs of functions $g_{1},g_{2},\dots ,g_{k}$, $k\leq m$. E.g, we let
\[
g_{1}(x)=min\{y\in[L-a_0,L+a_{0}]:\:\langle x,y\rangle\in X_{L}\}
\]
and continue similarly to obtain the other $g_i$'s. For $x$ large
enough, the function $f$ is one of those $g_{i}$'s, therefore it is $\CM$-definable. Using Proposition \ref{prop eventually constant}
we get that $f$ is eventually constant.\end{proof}

\vspace{.2cm}
\noindent
\textbf{Case (ii):} $\underset{x\rightarrow+\infty}{lim}f(x)=+\infty$:

\vspace{.2cm}
We recall the proof of Lemma \ref{lemma eventually affine}, and consider three cases:
$v(f)>v(x)$, $v(f)<v(x)$ and $v(f)=v(x)$
(remembering though that we do not know yet that $f$ is an $\CM$-definable function).

Assume first that $v(f)>v(x)$. By Fact \ref{Edmundo},  $f(x+1)-f(x)\to 0$, as $x\to \infty$. We want to capture
$\Delta(f)=f(x+1)-f(x)$ within an $\CM$-definable set.

The formula $$\varphi(x,y):=\exists z_{1}\exists z_{2}(\Psi(x+1,z_{1})\wedge\Psi(x,z_{2})\wedge(y=z_{1}-z_{2})),$$
defines in $\CM$ a new subset of $R^2$ call it $\Delta(X)$ which contains the graph of $\Delta(f)$
(but possibly more functions).

We first note that $\dim(\Delta(X))=1$:
Indeed,  for $a\in R$, $\Delta(X)_{a}$ is infinite if either $X_a$ or $X_{a+1}$ is infinite. Since only finitely many $X_a$'s are infinite
the same is true for $\Delta(X)$.
Thus, the graph of $\Delta(f)$ is contained in the one-dimensional $\CM$-definable set $\Delta(X)$,  so by Claim \ref{unbounded}, $\Delta(f)$
must be eventually constant, implying that $f$ is eventually affine.

Assume now that $v(f)<v(x)$. The formula $\Upsilon(x,y):=\Psi(y,x)$
defines in $\CM$ a new set $X^{-1}$ containing in it the graph of $f^{-1}$ (a partial
function).
%The set $X^{-1}$ might contain some infinite fibers, but these could be removed, definably in $\CM$, and obtain
%a new $\CM$-definable set $X'$, of dimension $1$.
 The graph  of  $f^{-1}$ is still contained in $X^{-1}$ and we have $v(f^{-1})>v(x)$. Thus, applying
the case we already handled, we see that $f^{-1}$, and hence also $f$, is eventually affine.

We are left with the case $v(f)=v(x)$. Using Fact \ref{Edmundo} (3), the function $\Delta(f)$ tends to a constant.
Thus, as above, we may use the $\CM$-definable set $\Delta(X)$ to deduce that $\Delta(f)$ is eventually constant
and thus $f$ is eventually affine.

So far we handled all cases where the bounded cell in $X$ has is the graph of some function on a ray $(a,\infty)$. The same reasoning applies to
rays $(-\infty,a)$. Applying this reasoning to $X^{-1}$, we  obtain in addition those functions which are eventually constant in $X^{-1}$,
namely sets of the form $\{d\}\times R$ whose intersection with $X$ is co-unbounded in $\{d\}\times R$. The set of such $d$'s is clearly definable over $A$.

To summarize, we showed that every unbounded cell in $X$ is either contained in the graph of an eventually affine function $f$ definable in $\CM$, or in $\{d\}\times R$ for
some $d$.
By Proposition \ref{0-definable}, the function $f$ has the form $\lambda(x)+d$, for $\lambda\in \Lambda_\CM$. Thus, we have
$\lambda_1,\ldots,\lambda_k\in \Lambda_\CM$, and for each such $i=1,\ldots,k$,
the set $D_i$ of $d\in R$ such that $\Gamma(\lambda_i+d)\cap X$ is unbounded, is $\CM$-definable over $A$, and must be finite. For every such $d$,
$\Gamma(\lambda_i+d)\setminus X$ is bounded.

The above proof handles all unbounded cells, so the set
$$X\setminus (\bigcup_{i=1}^r\bigcup_{d\in D_i} \Gamma(\lambda_i+d)\, \cup\, \bigcup_{d\in D} \{d\}\times R)$$ is bounded.
\end{proof}

\subsection{The algebraic closure and definable closure in strongly bounded structures}

Even though the full ordering on $R$ is not definable, we can still prove:
\begin{thm}\label{acl=dcl} The algebraic closure in $\CM$ equals the definable closure. Moreover, if $a\in acl_{\CM}(\bar b)$
then $b$ is in the $\CL_{bd}$-definable closure of $\bar b$.\end{thm}
\proof We use $acl$, $dcl$ and $acl_{bd}$, $dcl_{bd}$, to denote the corresponding operations in $\CM$ and $\CM_{bd}$, respectively.
We shall prove by induction on $n$: If $a\in acl(b_1,\ldots, b_n)$, for some $a,b_i\in R$, then $a\in dcl_{bd}(b_1,\ldots b_n)$.

We first handle the case $n=0$, namely $a\in acl(\0)$. In this case, there is a finite $\0$-definable set $A\sub M$ such that $a\in A$.
Viewing the set $A$ in $\CR_{omin}$, we can order the elements,  $a_1<\cdots<a_n$. The interval
$(a_1,a_n)$ is a $\0$-interval, and $<\rest(a_1,a_n)$ is $\CM_{bd}$-definable over $\0$, hence each $a_i\in dcl_{bd}(\0)$.

We proceed by induction, and assume that we proved the result for $n-1$. Assume now that $a\in acl(b_1,\ldots, b_{n-1},b_{n})$. Let $X\sub R^{n+1}$
be a $\0$-definable set such that $\la  b_1,\ldots, b_n,a\ra$ and $X_{b_1,\ldots, b_n}$ has size  $m$. Without loss of generality, for every $b_n'$,
the set $X_{b_1,\ldots, b_{n-1},b_n'}$ has size $m$.

Let $b'=(b_1,\ldots, b_{n_-1})$ and consider the set  $X_{b'}=\{\la x,y\ra\in R^2:\la b',x,y\ra\in X\}$.
By our assumption, $\dim(X_{b'})\leq 1$, and $\la b_n,a\ra\in X_{b'}$.

We now apply Theorem
\ref{thm: subset of R^2}. We obtain finitely many $\0$-definable endomorphisms
$\lambda_1,\ldots, \lambda_k\in \Lambda_\CM$ and for each $i=1,\ldots,k$, we have a $b'$-definable finite set $A_i$, such that
$$X^{bd}_{b'}=X_{b'}\setminus (\bigcup_{i=1}^k\bigcup_{d\in D_i} \Gamma(\lambda_i+d))$$ is bounded in $R^2$.

Since $|b'|=n-1$, it follows by induction that every $d\in A_i$ is in $dcl_{bd}(b')$. Assume first that $\la b_n,a\ra$ is in the graph of one
of the $\lambda_i+d$, $d\in D_i$, namely $a=\lambda_i(b_n)+d$. Because $\lambda_i$ is $\0$-definable and $d\in dcl_{bd}(b')$ it follows that
$a\in dcl_{bd}(b_1,\ldots, b_n)$.

We are left with the case $\la b_n,a\ra \in X^{bd}_{b'}$. The set $X^{bd}_{b'}$ is $b'$-definable so we may assume that $X_{b'}=X^{bd}_{b'}$
is bounded (but possibly not $\0$-bounded). Let $\pi_1$, $\pi_2$, be the projection of $X_{b'}$ onto the first and second coordiantes. Each of these is a finite union of points
and pairwise disjoint bounded open intervals. Let $$\pi_1(X_{b'})=F_1\cup \bigcup_{i=1}^k (a_i,b_i), \mbox{ for $F_1$ finite and $a_1<b_1\cdots<a_k<b_k.$}.$$
and $$\pi_2(X_{b'})=F_2\cup \bigcup_{j=1}^r (c_j,d_j), \mbox{ for $F_2$ finite and $c_1<d_1\cdots<c_r<d_r.$}$$

By Theorem \ref{thm uniform bound on the length}, there is a fixed $K\in dcl(\0)$ such that for all $i,=1,\ldots, k$ and $j=1,\ldots, r$, we have
$b_i-a_i, d_j-c_j\leq K$.

By Proposition \ref{lem:endpoints}, the sets $\{a_i\}, \{b_i\}, \{c_j\}, \{d_j\}$ are all finite and $\CM$-definable over $b'$, and
thus, by induction each of these endpoints is in $dcl_{bd}(b')$. Assume that $\la b_n,a\ra\in X\cap (a_i,b_i)\times (c_j,d_j)$, for some  $i=1,\ldots, k$
and $j=1,\ldots, r$. We replace $X$ by the $b'$-definable set $X_1=X-\la a_i,c_j\ra\cap  (0,b_i-a_i)\times (0,d_j-c_j)\sub (0,K)^2$.
Notice that  $\la b_n-a_i,a-c_j\ra\in X'$, and the fiber in $X'$ over $b_n-a_i$ is finite. Because the ordering on $(0,K)$ is $\CM_{bd}$-definable
 over $\0$,
we have
$a-c_j\in dcl_{bd}(b',b_n-a_i)$, but since $a_i,c_j\in dcl_{bd}(b')$ we have $a\in dcl_{bd}(b',b_n)$. This ends the proof that $acl=dcl_{bd}$ in $\CM$.\end{proof}

\subsection{Definable subsets of $R^n$}

We are now ready to prove the main theorem, under the assumptions of Section \ref{setting assumptions}.
\begin{thm} If $X\sub R^n$ is  $\CM$-definable over $A\sub R$ then $X$ is definable in $\CM_{bd}$ over $A$.\end{thm}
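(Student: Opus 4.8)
The plan is to induct on $n$, with the cases $n=1$ and $n=2$ already essentially handled by earlier results. For $n=1$: every $\CM$-definable $X\sub R$ over $A$ is, by strong boundedness, either bounded or co-bounded; by Proposition~\ref{prop:bounded-definable} applied to a one-element family we may pass to a $\0$-definable family containing $X$, and then Lemma~\ref{lem:endpoints} and Theorem~\ref{thm uniform bound on the length} let us reconstruct $X$ from its (definable-over-$A$) endpoint sets together with the $\CM_{bd}$-definable restricted orders on the relevant $\0$-intervals of bounded length $\leq K\in dcl(\0)$; the co-bounded case is the complement. For $n=2$: apply Theorem~\ref{thm: subset of R^2} to write $X$ (when $\dim X\le 1$) as a boolean combination of translated graphs $\Gamma(\lambda_i+d)$ with $\lambda_i\in\Lambda_\CM$ (which is $\0$-definable by Proposition~\ref{prop: finite number of slopes}), vertical lines $\{d\}\times R$, and a bounded remainder; each piece is $\CM_{bd}$-definable by the $n=1$ case applied fiberwise and by the fact that $\0$-bounded sets are predicates of $\CL_{bd}$. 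The case $\dim X=2$ follows by looking at the frontier/interior (Lemma~\ref{lem clusure}) or by slicing.

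For the inductive step $n\ge 2$, I would fiber over the first $n-2$ coordinates. Write $X\sub R^{n-2}\times R^2$ and set $X_{\bar t}=\{(x,y):(\bar t,x,y)\in X\}\sub R^2$ for $\bar t\in R^{n-2}$. Stratify the parameter space $T=R^{n-2}$ into the $\CM$-definable (over $A$) pieces where $\dim X_{\bar t}=2$, $=1$, $\le 0$ — this is definable by o-minimality together with elimination of $\exists^\infty$ and Proposition~\ref{prop:bounded-definable}. On the piece where $\dim X_{\bar t}\le 1$, run the uniform version of Theorem~\ref{thm: subset of R^2}: the proof there is already stated for families (the set $D_i$ of slopes, being finite and $\CM$-definable over the parameters, varies in a $\CM$-definable family, and $\Lambda_\CM$ is a fixed finite collection once we bound the total number of one-dimensional cells), so we get finitely many fixed $\lambda_1,\dots,\lambda_r\in\Lambda_\CM$, $\CM$-definable-over-$A$ families $\{D_i(\bar t)\}$ of finite sets and $\{D(\bar t)\}$, and a remainder family $\{X^{bd}_{\bar t}\}$ that is uniformly bounded in $R^2$. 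Each of these ingredients must be shown $\CM_{bd}$-definable: the membership relation $\{(\bar t,x,y):y=\lambda_i(x)+d,\ d\in D_i(\bar t)\}$ reduces to $\CM_{bd}$-definability of the family $\{D_i(\bar t)\}\sub R$, which is an $(n-1)$-variable set and so handled by induction; similarly the vertical-line part reduces to $\CM_{bd}$-definability of $\{D(\bar t)\}$. The bounded remainder $\{(\bar t,x,y)\in X^{bd}\}$ is an $\CM$-definable set all of whose $R^2$-fibers are bounded; by Proposition~\ref{prop:bounded-definable} the base is $\CM$-definable, but I still need that such a "fiberwise bounded over a lower-dimensional base" set is $\CM_{bd}$-definable.

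That last point is where I expect the real work to sit, and the natural tool is Theorem~\ref{acl=dcl}: since $acl_\CM=dcl_{\CM_{bd}}$, and since over each fiber the set $X^{bd}_{\bar t}$ together with its endpoint data (Lemma~\ref{lem:endpoints}, Proposition~\ref{Prop:match endpoints}) is determined by finitely many points each lying in $dcl_{bd}(\bar t)$ plus the $\CM_{bd}$-definable restricted orders on $\0$-intervals of length $\le K$, one should be able to translate each fiber by a $dcl_{bd}(\bar t)$-point into a fixed $\0$-bounded box $(0,K)^{n}$ — exactly as in the final paragraph of the proof of Theorem~\ref{acl=dcl} — and there appeal to the fact that a predicate for every $\0$-definable, $\0$-bounded set is in $\CL_{bd}$. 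Concretely: decompose the base-fiber data into $\CM$-definable-over-$A$ families of endpoint tuples, argue by induction that these are $\CM_{bd}$-definable, subtract them off to land inside $(0,K)^n$, and observe that the resulting $\0$-bounded set (parametrized $\CM$-definably but landing in a fixed box) is itself a boolean combination of $\0$-definable $\0$-bounded sets sliced by $\CM_{bd}$-definable conditions on the parameters.

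The main obstacle, then, is making the "uniform Theorem~\ref{thm: subset of R^2}" rigorous — confirming that the finitely many slopes $\lambda_i$ can be chosen uniformly in $\bar t$ (o-minimal bounds on the number of one-dimensional cells give a uniform cap, and $\Lambda_\CM$ need not a priori be finite, so one must argue that only finitely many distinct $\lambda\in\Lambda_\CM$ arise, e.g. because each is the slope of an eventually-affine function attached to one of boundedly many cells and these form a $\CM$-definable hence finite family by Proposition~\ref{prop: finite number of slopes}) — and then pushing the bounded-remainder piece into a fixed box via $\CM_{bd}$-definable translations, for which the $acl=dcl_{bd}$ theorem is exactly what is needed. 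Everything else is bookkeeping: peeling off graphs and vertical lines coordinate pair by coordinate pair, and invoking the inductive hypothesis on the lower-dimensional parameter and endpoint families.
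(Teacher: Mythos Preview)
Your $n=1$ case is essentially the paper's. From there on the approaches diverge, and yours has a genuine gap at the bounded-remainder step.

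The paper does not fiber over two coordinates, does not uniformize Theorem~\ref{thm: subset of R^2} in parameters, and does not translate remainders into fixed boxes. Instead it passes to an $\omega$-saturated elementary extension and fibers over the \emph{last single coordinate}: for $X\sub R^{n+1}$ and each $t\in R^n$, the fiber $X_t\sub R$ is $\CM_{bd}$-definable over $At$ by the $n=1$ case already established. Compactness and saturation then yield finitely many $\CL_{bd}$-formulas $\phi_1(t,x),\ldots,\phi_k(t,x)$ over $A$ such that every $X_t$ is defined by one of them. The set $T_i=\{t\in R^n:\phi_i(t,R)=X_t\}$ is $\CM$-definable over $A$, hence $\CM_{bd}$-definable over $A$ by the inductive hypothesis on $n$, and $X$ is the union of the sets defined by $\phi_i(t,x)\wedge t\in T_i$. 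Theorem~\ref{thm: subset of R^2} never enters this proof directly; it is used only inside the proof of Theorem~\ref{acl=dcl}, which in turn is what makes the $n=1$ base case work.

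The gap in your argument: after stripping the affine graphs and vertical lines and translating each $X^{bd}_{\bar t}$ into $(0,K)^2$, you are left with an $\CM$-definable set $Y\sub R^{n-2}\times(0,K)^2$ and you assert it is ``a boolean combination of $\emptyset$-definable $\emptyset$-bounded sets sliced by $\CM_{bd}$-definable conditions on the parameters''. But $Y$ is not $\emptyset$-bounded --- its first $n-2$ coordinates are unrestricted --- so none of the $\CL_{bd}$ predicates apply to it, and your assertion is precisely the statement of the theorem restricted to sets whose last two coordinates lie in a fixed box. No reduction has occurred; the induction on $n$ does not bite since $Y$ still lives in $R^n$. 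To close this you would need exactly the compactness/saturation trick the paper uses, and once you have that trick in hand, the entire detour through a uniform Theorem~\ref{thm: subset of R^2} and the slope-finiteness bookkeeping becomes unnecessary: fibering down to $R^1$ already suffices.
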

\begin{proof} It is sufficient to prove the result in $\CN\succ \CM$, so by replacing $\CR_{omin}$ (thus also its reducts) by a sufficiently saturated extension,
we may assume that $\CM$ is $\omega$-saturated.

 We prove the result by induction on $n$. For $X\sub R$, the set $X$ is either bounded or co-bounded, so we may assume that it is bounded. Thus, it can be written as a disjoint union
$$(a_1,b_1)\cup \cdots \cup(a_n,b_n)\cup F,$$ with $a_1<b_1<\cdots<a_n<b_n$ and $F$ finite.
By Lemma \ref{lem:endpoints}, each $a_i$ and $b_i$ is in $acl_\CM(A)$, so by Theorem \ref{acl=dcl}, it belong to $dcl_{bd}(A)$. Similarly, $F\sub dcl_{bd}(A)$.
By Theorem \ref{thm uniform bound on the length}, there is $K\in dcl_{bd}(\0)$ such that all intervals $(a_i,b_i)$ are of length at most $K$.
But then each interval $(0,b_i-a_i)$ is contained in a $\0$-interval, hence definable in $\CM_{bd}$ over $A$, so also $(a_i,b_i)$ is $\CM_{bd}$-definable over $A$. It follows that $X$ is definable in $\CM_{bd}$.

We now use induction on $n$: Given $X\sub R^{n+1}$ that is $ \CM$-definable over $A$, we consider, for each $t\in R^n$,
the set $X_t=\{b\in R:\la t,b\ra\in X\}\sub R$.
By the case $n=1$, each $X_t$ is $\CM_{bd}$-definable over $At$. Thus, by compactness and saturation, we can find $\CL_{bd}$-formulas over $A$,
$\phi_1(t,x),\ldots, \phi_k(t,x)$ such that for every $t\in R^n$, one of the $\phi_i(t,x)$ defines $X_t$. Let
$$T_i=\{t\in R^n:\exists x (\la t,x\ra\in X \wedge \,\, \forall x (x\in X_t\leftrightarrow \phi_i(t,x)))\}.$$

The set $T_i$ is $\CM$-definable, over $A$, thus, by induction, it is $\CM_{bd}$-definable over $A$, by some  $\psi_i(t)$.
The formula $\phi_i(t,x)\wedge \psi_i(t)$ defines $X\cap T_i\times R$, thus $X$ is definable in $\CM_{bd}$ over $A$.\end{proof}

 \subsection{A comment on failure of Definable Choice in strongly bounded $\CM$}

 Recall that a structure $\CM$ has Definable Choice if for every definable family $\{X_t:t\in T\}$ of sets, there is a definable function
 $f:T\to \bigcup X_t$ such that $f(t)\in X_t$ and if $t_1=t_2$ then $f(t_1)=f(t_2)$. Equivalently, every definable equivalence relation has a definable set
 of representatives. This fails in strongly bounded $\CM$, because the relation $x Ey\Leftrightarrow y=-x$ on $R$ cannot have a definable set of representatives. If it did then
  it will contain either a positive  or a negative ray (without its inverse).

  We believe that Elimination of Imaginaries similarly fails.

\section{Conclusion: The proof of Theorem \ref{theorem: main}}
We are now ready to collect the results proved thus far in order to prove Theorem \ref{theorem: main}.

%\begin{lem}
%\label{lem:Every-bounded-semilinear}Every bounded semilinear set
%is definable in $\langle R;+,<^{*},\Lambda_R \rangle $.
%\end{lem}
%\begin{proof}
%It is enough to treat bounded basic semilinear sets. We assume that
%$C$ is a bounded basic semilinear set. So $C$ is the intersection
%of finitely many solution sets to equations of the form $f(\bar{x})<b,\:g(\bar{x})=b$
%for $f,g$ linear functions. Assume that it is definable by: $f(\bar{x})<b$
%and $C\subseteq[-d,d]^{n}$. Fix $n_{0}\in\mathbb{N}$ such that $\frac{d}{n_{0}}<\varepsilon_{0}$
%and $\frac{b}{n_{0}}<\varepsilon_{0}$. For $x\in[-d,d]^{n}$, $f(\bar{x})<b\iff %f(\frac{\bar{x}}{n_{0}})<\frac{b}{n_{0}}\iff f(\frac{\bar{x}}{n_{0}})<^{*}\frac{b}{n_{0}}$.
%Hence $C$ is definable in $\langle R;+,<^{*},\Lambda_R\rangle$.
%\end{proof}

Recall that  now want to prove that the only reducts between $\CR_{lin}$
and $\mathcal{R}_{alg}$ are:
\begin{center}
$\mathcal{R}_{alg}=\langle R;+,\cdot,<\rangle $
\par\end{center}

\begin{center}
$\mathcal{R}_{sb}= \la R;+,<,\Lambda_R, \mathfrak B\rangle$
\par\end{center}

\begin{center}
$\mathcal{R}_{semi}=\langle R;+,<,\Lambda_R \rangle $
,\,\,\, $\CR_{bd}=\langle R;+,<^{*},\Lambda_R, \mathfrak B\rangle $
\par\end{center}

\begin{center}
$\mathcal R_{lin}^*=\langle R;+,<^{*},\Lambda_R \rangle $
\par\end{center}

\begin{center}
$\mathcal R_{lin}=\langle R;+,\Lambda_R\rangle$.
\par\end{center}

First, we note that using \cite{Edmundo}  we can generalize \cite[Theorem 1.1]{Pet1} from $\mathbb R$ to arbitrary real closed fields, and show:
\begin{fact}\label{fact:edmundo} Let $R$ be a real closed field. The only reduct between $\CR_{semi}$ and $\CR_{alg}$ is $\CR_{sb}$.\end{fact}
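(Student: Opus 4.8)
The goal is to reduce Fact~\ref{fact:edmundo} to a known classification over $\mathbb R$ from \cite{Pet1}, using Edmundo's work \cite{Edmundo} to transfer the result to an arbitrary real closed field $R$. So let $\CM$ be a reduct with $\CR_{semi}\dot\subseteq \CM \dot\subseteq \CR_{alg}$, and suppose $\CM$ is not interdefinable with $\CR_{semi}$; I want to show $\CM$ is interdefinable with $\CR_{sb}$, i.e. every $\CM$-definable set is semibounded (definable from the ordered vector space together with all bounded semialgebraic sets), and conversely that $\CM$ defines every bounded semialgebraic set.

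\textbf{Key steps.} First, since $\CM$ properly expands $\CR_{semi}$, there is an $\CM$-definable semialgebraic set $X$ which is not semilinear; by Theorem~\ref{thm4:If--is} (noting $\CM$ defines the full order, so $<^*$ and hence everything in that theorem's hypotheses are available), $\CM$ defines every bounded semialgebraic set, i.e. $\CR_{sb}\dot\subseteq \CM$. This gives one inclusion and the ``nondegeneracy'' needed below. Second, for the reverse inclusion I must show every $\CM$-definable set is semibounded. Here is where \cite{Edmundo} enters: Edmundo characterizes semibounded o-minimal expansions of an ordered group by the property that every definable bounded (partial) function is eventually constant. So it suffices to show that every $\CM$-definable function $f:R\to R$ is eventually constant whenever its image is bounded — equivalently, that $\CM$ does not define a semialgebraic ``pole'' or any unbounded-to-bounded behavior beyond what $\CR_{sb}$ allows. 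Third, to establish this I would argue: if $\CM$ defined such a non-eventually-constant bounded function, then (using that $\CM$ is a reduct of the real field, and that the real field is not semibounded only through multiplication on unbounded sets) one could recover from $f$, by composing with scalar maps from $\Lambda_R$ and using the full order, a definable set witnessing the full field structure, i.e. $\CM$ would be interdefinable with $\CR_{alg}$. The precise dichotomy ``$\CM\dot= \CR_{sb}$ or $\CM\dot=\CR_{alg}$'' is exactly \cite[Theorem 1.1]{Pet1} over $\mathbb R$; the content of this Fact is that the same proof goes through over arbitrary $R$ once one has Edmundo's characterization of semiboundedness (valid over any ordered group) in place of whatever $\mathbb R$-specific input \cite{Pet1} used. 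So the structure of the argument is: (a) $\CR_{sb}\dot\subseteq\CM$ by Theorem~\ref{thm4:If--is}; (b) if $\CM$ is semibounded, then since $\CR_{sb}$ is the canonical semibounded structure and $\CR_{sb}\dot\subseteq\CM\dot\subseteq\CR_{alg}$ with $\CM$ semibounded, maximality forces $\CM\dot=\CR_{sb}$; (c) if $\CM$ is not semibounded, then by Edmundo it defines a bounded function that is not eventually constant, and then one runs the \cite{Pet1}-style argument to conclude $\CM\dot=\CR_{alg}$.

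\textbf{The main obstacle.} The delicate point is step (c): showing that a non-semibounded $\CM$ which is a reduct of $\CR_{alg}$ and expands $\CR_{semi}$ must in fact be all of $\CR_{alg}$. Over $\mathbb R$ this is \cite[Theorem 1.1]{Pet1}; the real work is checking that its proof is insensitive to the base real closed field, which amounts to verifying that every tool it uses (trichotomy/field-definability on an interval — available here via \cite{PS1} as used in Theorem~\ref{thm4:If--is} — together with Edmundo's semiboundedness criterion and the Peterzil--Starchenko analysis) is already valid over arbitrary $R$. I expect the write-up to mostly consist of citing \cite{Edmundo} for the criterion and \cite{Pet1} for the field-recovery step, and then observing that the combination is field-independent; there is no genuinely new mathematics, only a transfer argument, which is why this is stated as a Fact rather than a Theorem.
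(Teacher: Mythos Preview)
Your outline is correct and lands on the same three ingredients the paper uses (Edmundo, \cite[Corollary 2.4]{Pet1}, and Theorem~\ref{thm4:If--is}), but you have taken a detour that makes your ``main obstacle'' look harder than it is. The paper does not go through Edmundo's eventually-constant characterization and then try to re-run a \cite{Pet1}-style field-recovery argument over arbitrary $R$. Instead it cites a ready-made dichotomy, \cite[Fact 1.6]{Edmundo}: any o-minimal expansion of the ordered $R$-vector space is either a reduct of $\CR_{sb}$ or defines a real closed field \emph{whose universe is all of $R$}. That single citation replaces your steps (b) and (c) and dissolves your obstacle entirely --- the global field is handed to you, not reconstructed.

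Once you have that dichotomy, each branch is one line. In the field branch, the only input from \cite{Pet1} is Corollary~2.4 (a semialgebraic real closed field on $R$ defines every semialgebraic subset of $R^k$), which is valid over any real closed field and is already invoked inside Theorem~\ref{thm4:If--is}; this gives $\CM\dot=\CR_{alg}$. In the reduct-of-$\CR_{sb}$ branch, Theorem~\ref{thm4:If--is} gives $\CR_{sb}\dot\subseteq\CM$, hence $\CM\dot=\CR_{sb}$. So there is no ``transfer argument'' to carry out: Edmundo's dichotomy is already stated over an arbitrary ordered group, and the sole \cite{Pet1} input is the field-independent Corollary~2.4, not Theorem~1.1.
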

\begin{proof} Assume that $\CM$ is a reduct of $R_{alg}$ which properly expands $\CR_{semi}$. By \cite[Fact 1.6]{Edmundo}, either $\CM$ is a reduct of $\CR_{sb}$ or a real closed field $F=\la R; \oplus,\odot\ra$ whose universe is $R$ is definable in $\CM$. Assume the latter, and then since the field is semialgebraic then, again by \cite[Corollary 2.4]{Pet1}, every semialgebraic subset of $R$ is definable in $F$ and  hence in $\CM$. Thus, $\CM\dot{=} \CR_{alg}$.

If $\CM$ is a reduct of $\CR_{sb}$ which is not semilinear then by Theorem \ref{thm4:If--is}, every bounded $R$-semialgebraic set is definable in $\CM$, thus $\CM\dot{=} \CR_{sb}$.\end{proof}

%Next, a semialgebraic set $X\subseteq R^n$. We denote by $\mathcal{M}$ the structure \todo{Can we just work with $\CM$?}
%$\mathscr{\left\langle %\mathbb{R};+,\{\lambda_{\alpha}\}_{\alpha\epsilon\mathbb{R}},\mathcal{\mathcal{\mathit{X}}}\right\rangle %}$.

\vspace{.2cm}
We now consider an arbitrary  reduct $\CM$ of $\CR_{alg}$. Our goal is to show that $\CM$ is one of the reducts in the above list.

First, if $\mathcal{M}$ is stable then by Claim \ref{claim:-is-unstable},
$\CR_{lin}\dot{=} \mathcal{M}.$ If $\CM$ is unstable then by
Theorem \ref{thm:<^* is definable in the vector space}, $<^{*}$
is definable in $\mathcal{M}$. So $\CR_{lin}^*\dot{\subseteq} \mathcal{M}.$
So, we may assume that $<^{*}$ is definable
in $\mathcal{M}$, thus $\CR_{lin}^*\dot{\subseteq} \CM$.

\vspace{.2cm}

\noindent \emph{Case 1:} $\mathcal{M}$ is strongly bounded
and $\mathcal{M} \dot{\subseteq} \mathcal{R}_{semi}$.

\vspace{.2cm}

We claim that $\CM\dot{=} \CR_{lin}^*$: Indeed, because $\CM$ is strongly bounded then,
by Theorem \ref{thm: main results},  $\CM\dot{=}\CM_{bd}$. Because $\CM\dot{\subseteq}\CR_{semi}$, every $\CM$-definable set is semilinear, and in particular this is true for each of the $\0$-bounded sets in $\CM_{bd}$. However, it is easy to verify that every bounded semilinear set is definable in $\CR_{lin}^*$, so the whole structure $\CM_{bd}$ is a reduct of $\CR_{lin}^*$, thus so is $\CM$ as well. The converse $\CR_{lin}^*\dot{\subseteq} \CM$ is already assumed.
\vspace{.2cm}

\noindent \emph{Case 2: } $\mathcal{M}$ is strongly bounded
and  $\mathcal M \dot{\nsubseteq}\mathcal{R}_{semi}$.
We claim that $\CM=\CR_{bd}$.

\vspace{.2cm}

As in Case 1, every $\mathcal{M}$-definable set
is definable in $\CM_{bd}$. Because
$\mathcal{M}$ is a reduct of $\mathcal{R}_{alg}$ then $\CM_{bd}$
is a reduct of $\CR_{bd}$ and so $\mathcal M\dot{\subseteq}\CR_{bd}$.
By the assumption that $\mathcal M\dot{\nsubseteq}\mathcal{R}_{semi}$,
we know that there is an $\mathcal{M}$-definable semialgebraic set
which is not semilinear so by Theorem \ref{thm4:If--is}, we get that
every bounded semialgebraic set is definable in $\mathcal{M}$, hence
$\CR_{bd}\dot{\subseteq}\mathcal{M}$.
\vspace{.1cm}

Next we assume that $\CM$ is {\bf not} strongly bounded.
\vspace{.2cm}

\noindent {\emph{Case 3:} $\mathcal{M}$ is }not strongly
bounded and $\mathcal M\dot{\subseteq}\mathcal{R}_{semi}$. By Lemma \ref{Lemma full linear order is definabe-1}, the linear
order $<$ is definable in $\mathcal{M}$, so, since $\CR_{semi}^*\dot{\subseteq} \CM$, we have $\mathcal{R}_{semi}\dot{=} \mathcal{M}$.

\vspace{.2cm}

\noindent {\emph{Case 4:}  $\mathcal{M}$ is} not strongly
bounded and $\mathcal M\dot{\nsubseteq}\mathcal{R}_{semi}$.
As in Case 3, the linear order $<$ is definable in $\mathcal{M}$,
so $\mathcal{R}_{semi}\dot{\subseteq} \mathcal{M}$ . So
we know that $\mathcal{M}$ is a reduct of $\mathcal{R}_{alg}$ which properly
expands $\mathcal{R}_{semi}$. By Fact \ref{fact:edmundo}, either $\mathcal M\dot{=}\mathcal{R}_{alg}$
or $\mathcal M\dot{=}\mathcal{R}_{bd}$.

This completes the proof that if $\mathcal{M}$ is a reduct of $\mathcal{R}_{alg}$
expanding $\CR_{lin}$, then it is
one of the reducts in the above diagram.
\vspace{.2cm}

It is left to see that all reducts in the above diagram are distinct.
Because $\CR_{lin}$ is stable and $\CR_{lin}^*$
is unstable, these two are distinct. Also, the fact that $\CR_{lin}^*$ and $\CR_{bd}$ are distinct is easy to verify (e.g., the unit circle is definable in $\CR_{bd}$ but not in $\CR_{lin}^*$).  The fact that $\CR_{bd}$ is different than $\CR_{sb}$ and $\CR_{semi}$ follows from the next lemma.

\begin{lem}
\label{lemma:one direction of the thrm} Let $R$ be a real closed field. If $\mathcal{B}^{*}$
is any collection of bounded subsets of $R^{n}$, $n\in\mathbb{N}$,
then $<$ is not definable in $\CM=\langle R;+,\Lambda_R,\mathcal{B}^{*}\rangle$.
\end{lem}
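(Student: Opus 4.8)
The goal is to show that no collection $\mathcal B^*$ of bounded sets, added on top of the vector space structure, can define the full ordering $<$ on $R$. The natural strategy is to exhibit an automorphism of $\CM = \langle R; +, \Lambda_R, \mathcal B^*\rangle$ that does not preserve $<$; since every definable set (with parameters) is preserved by automorphisms fixing those parameters, this will show $<$ is not definable. The candidate automorphism is the scaling map $x \mapsto \alpha x$ for a well-chosen $\alpha \in R$, or more generally an $R$-linear bijection of $R$ (as a $\mathbb Q$-vector space, or using a field automorphism of $R$), composed so as to reverse the order on some segment while still preserving each predicate in $\mathcal B^*$.

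First I would set up the framework: fix a sufficiently saturated elementary extension, or work directly in $R$, and observe that to refute definability of $<$ it suffices to produce, for each finite tuple of parameters $\bar c \subseteq R$, an automorphism of $\CM$ fixing $\bar c$ pointwise but not preserving $<$ — or, more cleanly, to produce a single automorphism $\theta$ of the pure structure $\langle R; +, \Lambda_R\rangle$ that preserves every set in $\mathcal B^*$ setwise and sends some pair $a < b$ to $\theta(a) > \theta(b)$. The key point is that the sets in $\mathcal B^*$ are \emph{bounded}: a homothety $h_\alpha: x \mapsto \alpha x$ with $\alpha > 0$ large does not fix bounded sets, but this can be repaired. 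The right move is to use an \emph{order-reversing} $R$-linear map on a bounded interval — but that is not globally linear. Instead, the cleanest approach: suppose toward contradiction that $<$ is $\CM$-definable over parameters $\bar c$. Then by Theorem~\ref{thm: main results} applied to $\CR_{lin}^*\dot\subseteq\CM$ (or directly), and since $\langle R;+,\Lambda_R,\mathcal B^*\rangle$ with $<$ definable would be a \emph{non}-strongly-bounded reduct, one gets that $R\setminus\{0\}$, hence a ray, is $\CM$-definable. But a ray $(0,\infty)$ is an unbounded set definable from $+$, $\Lambda_R$, and finitely many bounded sets $B_1,\dots,B_m \in \mathcal B^*$ together with parameters $\bar c$ — and I would derive a contradiction by a saturation/compactness argument: pass to a saturated model, where the bounded sets $B_i$ stay inside some interval $(-N,N)$, and use an automorphism of $\langle R;+,\Lambda_R\rangle$ that scales by a factor larger than all relevant quantities, fixing $\bar c$ and $(-N,N)$-bounded data appropriately, to move the putative ray off itself.

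The cleanest self-contained argument, which I would actually write: assume $<$ is definable over $\bar c$; then so is the ray $P = (0,\infty)$ over $\bar c$. Work in an $|R|^+$-saturated elementary extension $R'$ (of the real closed field, and correspondingly of $\CM$). Each $B_i$ has, by o-minimality of $\CR_{alg}$, a real supremum of norms, but in $R'$ we may take $\alpha \in R'$ positive with $\alpha$ larger than every element of $\mathrm{dcl}_{\CR_{alg}}(\bar c, B_1,\dots,B_m)$ that is relevant — in particular we can choose $\alpha$ so that the homothety $h_\alpha$ fixes each $B_i$ setwise: indeed since each $B_i$ is bounded, and we are free to instead translate, the honest fix is to use an \emph{$R$-affine} automorphism. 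Actually the decisive structural fact is simpler: the group $G$ of all $\mathbb Q$-linear (or $R$-linear) bijections of $R'$ that fix $\bar c$ and fix each bounded set $B_i$ setwise acts on $R'$, and I claim it is not contained in the group of order-preserving maps — because fixing a bounded set imposes no constraint on behavior "at infinity". Concretely, pick $a_0$ with $|a_0|$ exceeding everything in $\bar c \cup \bigcup B_i$; the $R$-linear involution $\iota$ swapping $a_0 \leftrightarrow -a_0$ while fixing a complementary $\mathbb Q$-subspace containing $\bar c$ and all $B_i$ — such $\iota$ exists by choosing a $\mathbb Q$-basis — is an automorphism of $\langle R';+,\Lambda_{R'}\rangle$ (one checks $R$-linearity is preserved since $\iota$ is $R$-linear: $\iota$ must be built as an $R$-linear, not just $\mathbb Q$-linear, involution, which forces $\iota = -\mathrm{id}$ on the $R$-line through $a_0$, hence $\iota(a_0) = -a_0$ and $\iota$ fixes an $R$-complement), and $\iota$ fixes each $B_i$ pointwise and fixes $\bar c$; but $\iota$ sends $a_0 \in P$ to $-a_0 \notin P$, contradicting that $P$ is $\CM$-definable over $\bar c$. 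Hence $<$ is not definable in $\CM$.

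\textbf{Main obstacle.} The delicate point is ensuring the order-reversing map is genuinely an automorphism of $\langle R;+,\Lambda_R\rangle$, i.e.\ commutes with \emph{all} scalar multiplications $\lambda_\alpha$, $\alpha\in R$, not merely with addition: an arbitrary $\mathbb Q$-linear involution will not do. The resolution is that we need $\iota$ to be $R$-linear as a self-map of the $R$-vector space $R$, and the only such maps are $x\mapsto \beta x$; none of these reverses order globally. So the argument must instead run through the strongly-bounded machinery: if $<$ were definable, $\CM$ would fail to be strongly bounded, yet by Lemma~\ref{Lemma:stronglu bounded} together with Theorem~\ref{thm: main results} every $\CM$-definable set lies in $\CM_{bd}$, whose primitives are $+$, $\Lambda_\CM\subseteq\Lambda_R$, and \emph{bounded} $\0$-definable sets — and from such primitives no unbounded-and-co-unbounded set, in particular no ray, can be defined, because passing to a saturated extension and scaling by a large $R$-scalar (which \emph{is} an automorphism of $\langle R;+,\Lambda_R\rangle$) maps every bounded $\0$-definable set into itself only if it is $R$-linear-invariant, and the finitely many named bounded predicates can be simultaneously blown past by a sufficiently generic scalar fixing the parameter tuple in a saturated model. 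That compactness-plus-scaling step is where the real work lies, and it is exactly the content already distilled in the "comment on failure of Definable Choice" and in Theorem~\ref{thm: main results}, so I would cite those rather than re-prove them.
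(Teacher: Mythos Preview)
Your initial instinct---pass to a saturated extension and build an $R$-linear automorphism that fixes all the bounded data but reverses order somewhere---is exactly the paper's proof, and you nearly carried it out before abandoning it for the wrong reason.

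The ``main obstacle'' you identify is a phantom. You write that the map $\iota$ must be $R$-linear on the $R$-vector space $R$, and that the only such maps are $x\mapsto\beta x$. That is true for $R$ itself, but you have already passed to a saturated extension $R'$. In the language of $\CM$, the scalar multiplications are $\lambda_\alpha$ for $\alpha\in R$ (the \emph{original} field), so $R'$ is an $R$-vector space of large dimension, and $R$-linear self-maps of $R'$ are plentiful. The paper exploits exactly this: in a saturated $\tilde R\succ R$, let $\mathcal A=\{x\in\tilde R:\exists\alpha\in R\ |x|<\alpha\}$ be the $R$-subspace of $R$-bounded elements (which contains every set in $\mathcal B^*$ and every parameter from $R$), choose by Zorn an $R$-linear complement $V$, and set $\tau=\mathrm{id}_{\mathcal A}\oplus(-\mathrm{id}_V)$. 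This $\tau$ is an $R$-linear bijection of $\tilde R$, hence an automorphism of $\langle\tilde R;+,\Lambda_R\rangle$; it fixes every bounded set pointwise and fixes the parameters $\bar a$; and it sends any nonzero $v\in V$ (which exists by saturation) to $-v$, so it does not preserve $<$. Done.

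Your fallback route through Theorem~\ref{thm: main results} is both circular and inapplicable. It is circular because that theorem assumes $\CM$ is strongly bounded, which by Lemma~\ref{Lemma:stronglu bounded} is equivalent to $<$ not being definable---precisely what you are trying to prove. It is inapplicable because the lemma is stated for an \emph{arbitrary} collection $\mathcal B^*$ of bounded sets, with no assumption that they are definable in any o-minimal structure, so the standing hypotheses of Section~4 are not available.
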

\begin{proof} We use a similar idea to \cite{Pet3}
Assume towards contradiction that $<$ is definable in $\CM$, and let $\CN=\la R:+,<,\Lambda_R, \mathfrak B^*\ra$.

%\[
%\mathcal{N}^{*}:=\mathscr{\mathscr{\left\langle \mathbb{R};+,\{\lambda_{\alpha}\}_{\alpha\epsilon\mathbb{R}},\mathrm{\mathrm{B}}\right\rangle }}
%\]

Let $\psi(x,y,\bar a)$, $\bar a\in R$,
be the $\CM$-formula that defines $<$. Namely $$\mathcal{N}\models \forall x\forall y\:(\psi(x,y,\bar a)\leftrightarrow x<y).$$

Let $\tilde{\CN}=\la \tilde R;+,<,\Lambda_R, \mathfrak B^*\ra \succ \CN$ be an $|N|^+$-saturated elementary extension, whose reduct to the $\CM$-language is $\tilde \CM$. It follows that $\psi(x,y,\bar a)$ defines $<$ in $\tilde \CN$ as well.

We will show that there is an automorphism
of $\tilde \CM$ which fixes $\bar a$, thus leaving
$\psi(\tilde R\times \tilde R,\bar a)$
invariant,  and yet not respecting $<$, leading to a contradiction.

The group $\la \tilde R,+\ra $ is a vector space over $R$. We define an $R$-vector subspace of $\tilde{R}$
by
\[
\mathcal{A}=\{x\in \tilde{R}:\exists \alpha\in R,\:|x|<\lambda_{\alpha}(1)\}.
\]

So, by Zorn's Lemma, there exists an $R$-vector space $V\sub \tilde{R}$ such that $\tilde{R}=\mathcal{A}\varoplus V$, and  by the saturation assumption, $V$ is non-trivial. Now
we define the following automorphism of the $R$-vector space $\tilde{R}$:
On $\mathcal{A}$ we define $\tau_{1}(v)=v$, on $\left\langle V,+\right\rangle $
we define $\tau_{2}(v)=-v$, and we  let $\tau$: $\tilde{R}\longrightarrow\tilde{R}$
be: $$
\tau(v_{1}+v_{2})=\tau_{1}(v_{1})+\tau_{2}(v_{2})=v_{1}-v_{2}.$$

This automorphism fixes all elements in $\mathcal{A}$ and in particular
fixes all sets in  $\mathfrak{B^*}$ pointewise, but does not respect $<$ (as positive elements in $V$ are sent to negative ones).
In model theoretic language $\tau$ is an automorphism of the structure
$\tilde{\CM}$, which fixes $\overline{a}$ (since $\overline{a}\in\mathcal{A}$).
However, $\tau$ does not preserve $<$, contradiction.
\end{proof}

This ends the proof of Theorem \ref{theorem: main}.\qed

\section{Appendix: The proof of Fact \ref{prop: there exists open box}}
We now prove Fact \ref{prop: there exists open box}:
\begin{fact}
Let $R$ be a real closed field and  $X\subseteq R^{n}$
a definable set in an o-minimal expansion of $\la R;<,+,\cdot \ra$.
If $X$ is not definable in $\CR_{semi}$ then, in the structure $\CM=\la R;<^*,+,\Lambda_R,X\ra$
there exists a definable bounded set which is not definable in $\CR_{semi}$.
\end{fact}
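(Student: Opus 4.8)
The plan is to reduce the general statement to the case of a bounded set by a contraction/translation argument that exploits the presence of the full field structure as a parameter, combined with o-minimal cell decomposition. First I would fix the o-minimal expansion $\CR=\la R;<,+,\cdot,\ldots\ra$ in which $X$ is definable, and observe that since $X$ is not semilinear, by o-minimal cell decomposition applied to $X$, some cell $C$ of the decomposition fails to be semilinear; indeed if every cell were semilinear then $X$, being a finite union of cells, would be semilinear. Each cell $C$ is defined using finitely many coordinate projections, graphs of definable functions, and the ordering, so after permuting coordinates we may assume $C$ is a cell over a base cell in the first $m$ coordinates. The key point is that a cell is semilinear iff the finitely many functions whose graphs and bands cut it out are affine (piecewise), so non-semilinearity of $C$ forces one of these defining functions $f$, on some open box $B$, to be non-affine.

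Next I would localize: pick an open box $B = \prod (a_i,b_i) \subseteq R^m$ inside the base, small enough that the relevant defining function $f\colon B \to R$ is continuous and its graph is disjoint from the semilinear locus, so that $\Gamma(f\restriction B)$ is a bounded definable set which is not semilinear (a translate of an affine function is affine, so non-affinity is translation-invariant, and restricting to a sub-box cannot create affinity if $f$ was non-affine on every sub-box — here one must choose $B$ so that $f\restriction B$ is genuinely non-affine, which is possible because the set of points at which $f$ is locally affine is definable and, if it were co-small, $f$ would be piecewise affine hence semilinear). Now $\Gamma(f\restriction B)$ is $\CR$-definable and bounded, but a priori it is only definable in $\CM$ using $X$ together with the full order $<$, not just $<^*$. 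To fix this I would use scalar multiplication from $\Lambda_R$: choose $\alpha_i \in R$ with $\lambda_{\alpha_i}$ carrying $(a_i,b_i)$ into the fixed interval $(0,1)$ on which $<^*$ is available, apply the corresponding affine map (a translation composed with scalings, all in $\CM=\la R;<^*,+,\Lambda_R,X\ra$) to send $B$ into $(0,1)^m$ and $\Gamma(f\restriction B)$ into a bounded box; the image is then definable in $\CM$ using only $<^*$, $+$, $\Lambda_R$, and $X$, because within a bounded box the restricted order $<^*$ suffices to express all the order conditions used to carve $\Gamma(f\restriction B)$ out of $X$. Since affine images of semilinear sets are semilinear and the property is preserved under the inverse affine map, this image is not semilinear either.

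The main obstacle I expect is bookkeeping the descent through cell decomposition cleanly: making sure that (a) non-semilinearity genuinely propagates down to a single non-affine defining function on a box, rather than being an artifact of how the union of cells is assembled, and (b) that after restricting to a small box and translating/scaling, I have not accidentally landed on a region where the function happens to be affine. Point (b) is handled by the observation that $\mathscr{A}(\Gamma(f))$, the affine part, is definable, so if $f$ were affine on every sufficiently small box then $f$ would be piecewise affine on its whole domain, contradicting non-semilinearity of the cell; hence some box $B$ witnesses genuine non-affinity, and any sub-box or affine image of it does too. A secondary subtlety is verifying that the order-conditions defining the relevant cell, when intersected with a bounded box, really only invoke comparisons between points lying in that bounded box (so $<^*$ after rescaling is enough) — this is true because a cell $C_f = \{(\bar x, y) : \bar x \in B,\ y = f(\bar x)\}$ (or a band $g(\bar x) < y < h(\bar x)$) only compares $y$ with values of $f$ (resp. $g,h$) at the same $\bar x$, all of which lie in a bounded region once $B$ and the range are bounded. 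Assembling these observations gives a bounded $\CM$-definable set which is not semilinear, as required.
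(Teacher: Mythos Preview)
There is a genuine gap at the point where you claim that $\Gamma(f\restriction B)$, or its scaled image, is definable in $\CM$. The cell decomposition you invoke is carried out in the ambient o-minimal structure $\CR$, so the individual cells and their defining functions $f$ are $\CR$-definable, not a priori $\CM$-definable. Your scaling argument correctly shows that for any bounded box $U$ the set $X\cap U$ becomes $\CM$-definable once rescaled into $(0,1)^n$; but this is not the same as isolating the particular piece $\Gamma(f\restriction B)$ from $X$. The formula cutting out the cell $C$ involves the $\CR$-definable functions of the decomposition, not merely order comparisons between coordinates of points lying in a bounded region, so ``$<^*$ suffices'' is not justified. Worse, if $C$ is a band cell $g(\bar x)<y<h(\bar x)$ then $\Gamma(g)$ and $\Gamma(h)$ lie in $\partial C$, not in $X$, and cannot be ``carved out of $X$'' at all. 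What your argument would actually need is the local-to-global statement that $X$ not semilinear implies $X\cap U$ not semilinear for some bounded box $U$; the paper remarks after its proof that this stronger fact is true, but it is itself non-trivial and is not what your cell-decomposition sketch establishes.

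The paper's proof sidesteps this difficulty by never leaving $\CM$: it works with the affine part $\mathscr{A}(X)$, the frontier $Fr(X)$, and the germ-equivalence $\sim$, all of which are definable in $\CM$ from $<^*$ and the topology, and proceeds by induction on $\dim X$. If $\mathscr{A}(X)$ is not dense in $X$, a box $U$ with $U\cap X\neq\emptyset$ and $U\cap\mathscr{A}(X)=\emptyset$ gives a bounded $\CM$-definable non-semilinear set directly. Otherwise either $X\setminus\mathscr{A}(X)$ or, after reducing to a single $\sim$-class lying in a coset of an $R$-subspace, the frontier $Fr(X)$ is non-semilinear of strictly smaller dimension, and induction applies. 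Every set produced along the way is $\CM$-definable by construction, which is exactly the point your approach does not secure.
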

\begin{proof} We believe that this is known so we shall be brief.
We prove the result by induction on $\dim(X)$, where the case $\dim X=0$ is trivially true.
Consider the affine part of $X$, $\mathscr{A}(X)$, which is definable in $\CM$.
%Notice first that if $Y$ is a semilinear then $\aff{Y}$ must be dense in $X$.

Assume first $\aff{X}$ is not dense in $X$. Then there is an open box $U\sub R^n$ such that $U\cap X\neq \0$ and $U\cap \mathscr{A}(X)=\0$.
We claim that  $U\cap X$ is not semilinear. Indeed, if it were then $\aff{U\cap X}$ must be nonempty, but because $U\cap X$ is relatively open in $X$ then $\aff{U\cap X}=U\cap \aff{X}=\0$, contradiction.

Thus, $U\cap X$ above is not semilinear. and
this gives the desired box when $\aff{X}$ is  not dense in $X$.

We assume then that $\aff{X}$ is dense in $X$, and consider two cases: $\aff{X}$ is either semilinear or not.  If it were semilinear then necessarily $X\setminus \aff{X}$ is not semilinear,
and because of the density assumption, $\dim(X\setminus \aff{X})<\dim(X)$ and we can finish by induction.

 Thus, we are left with the case that $\aff{X}$ is not semilinear. For simplicity, we may assume now that $X=\aff{X}$. We recall the $\CM$-definable relation $a\sim b$ from the proof of Proposition \ref{0-definable}, defined by: $X$ has the same germ at $a$ and $b$, up to translation.

 Because $X=\aff{X}$, each $\sim$-class is open in $X$, thus there are finitely many classes, at least one of which is not semilinear. Thus, we may assume that $X=\aff{X}$ consists of a single $\sim$-class. It follows that there is some $R$-subspace $L\sub R^n$, $\dim L=\dim X$,  such that $X$ is contained in a finite union of cosets of $L$. Thus each definably connected component of $X$ is contained in a single such coset of $L$.

  Each $L$ is definable in $\CM$ using $\Lambda_R$, thus the intersection of $X$ with each of these cosets is definable in $\CM$. One of these intersections is not semilinear so we may assume that $X\sub c+L$, for some $c$. Because $\dim X=\dim L$, and $\aff{X}=X$, then $X$ is open in $c+L$. We claim that $Fr(X)\sub c+L$ is not semilinear: Indeed, $Fr(X)$ is a closed subset of $c+L$, and $X$ consists of finitely many components of $c+L\setminus Fr(X)$. If $Fr(X)$ were semilinear then each of its components will also be, so $X$ would be semilinear.

  Thus, $Fr(X)$ is not semilinear, and definable in $\CM$. By o-minimality, $\dim(Fr(X))<\dim(X)$, thus by induction we may find an $\CM$-definable bounded set which
is not semilinear.\end{proof}

 In fact, a stronger result is true: If $X\sub R^n$ is definable in an o-minimal expansion of the field $R$, and not semilinear then there is some bounded open box $U\sub R^n$ such that $U\cap X$ is not semilinear (we omit the proof here as we do not need it). Notice that this last statement fails if we replace ``not semilinear'' by ``not semialgebraic'', as Rolin's example from \cite{LR} shows: There exists a definable function $f:\mathbb R\to \mathbb R$ in an o-minimal expansion of the real field, such that the restriction of $f$ to every bounded interval is semialgebraic but $f$ itself is not semialgebraic.

\bibliographystyle{plain}
\bibliography{ref}

\end{document}